\documentclass[12pt,reqno]{amsart}
\usepackage{amsmath, amssymb, amsfonts, amsthm, comment}
\usepackage{mathtools,xcolor}
\usepackage{hyperref}
\usepackage[all,cmtip]{xy}
\usepackage{tkz-euclide}
\usepackage{tikz-cd}
\usepackage[utf8]{inputenc}
\allowdisplaybreaks
\newtheorem{thm}{Theorem}[section]
\newtheorem*{thm*}{Theorem}
\newtheorem*{coro*}{Corollary}
\newtheorem{lem}[thm]{Lemma}
\newtheorem{prop}[thm]{Proposition}
\theoremstyle{definition}
\newtheorem{dfn}[thm]{Definition}
\newtheorem{exple}[thm]{Example}

\theoremstyle{plain}
\newtheorem{cor}[thm]{Corollary}
\numberwithin{equation}{section}
\usepackage[a4paper, top = 1.2in, bottom = 1.2in, left = 1.2in, right = 1.2in]{geometry}
\numberwithin{equation}{section}

\newcommand{\N}{\mathbb{N}}
\newcommand{\Q}{\mathbb{Q}}

\newcommand{\Z}{\mathbb{Z}}

\newcommand{\F}{\mathbb{F}}

\newcommand{\mfa}{\mathfrak{a}}

\newcommand{\mfp}{\mathfrak{p}}
\newcommand{\mfl}{\mathfrak{l}}

\newcommand{\SL}{\mathrm{SL}}
\newcommand{\GL}{\mathrm{GL}}

\newcommand{\End}{\mathrm{End}}
\newcommand{\Aut}{\mathrm{Aut}}
\newcommand{\Frob}{\mathrm{Frob}}
\newcommand{\tr}{\mathrm{tr}}

\newcommand{\Spec}{\mrm{Spec}}
\newcommand{\sep}{\mrm{sep}}
\newcommand{\trace}{\mrm{trace}}
\newcommand{\Nr}{\mrm{Nr}}

\newcommand{\Gal}{\mrm{Gal}}
\newcommand{\Ima}{\mrm{Im }}
\newcommand{\spn}{\mrm{span }}

\newcommand{\lra}{\longrightarrow}
\newcommand{\ra}{\rightarrow}

\newcommand{\mrm}[1]{\mathrm{#1}}

\def\1{1\!\!1}

\newcommand{\psmat}[4]{\bigl( \begin{smallmatrix} #1 & #2 \\ #3 & #4 \end{smallmatrix} \bigr)}

\title[On the surjectivity of 
Galois representations]{On the surjectivity of 
Galois representations attached to Drinfeld $A$-modules of rank $2$}
\author[N. Kumar]{Narasimha Kumar}
\email{narasimha@math.iith.ac.in}
\address{
	Department of Mathematics \\
	Indian Institute of Technology Hyderabad\\
	Kandi, Sangareddy - 502284\\
	INDIA.
}
\author[D. Shit]{Dwipanjana Shit}
\email{ma22resch01001@iith.ac.in}
\address{
	Department of Mathematics \\
	Indian Institute of Technology Hyderabad\\
	Kandi, Sangareddy - 502284\\
	INDIA.
}

\keywords{Drinfeld modules, Galois representations, Surjectivity, Density}
\subjclass[2010]{Primary 11G09, 11F52; Secondary 11F80}
\begin{document}
	\begin{abstract}
        Let $\F_q$ be a finite field with $q$ elements, where $q$ is a prime power and let $A:= \F_{q}[T]$. By~\cite{PR09}, the adelic image of the Galois representation attached to a rank $2$ Drinfeld $A$-module $\varphi$ is open, and determining when it is surjective remains a subtle problem. To resolve this question, in this article, we study the $\mfp$-adic surjectivity of the Galois representations attached to $\varphi$, where $\mfp \in \Omega_A:= \Spec(A) \setminus \{ (0) \}$. There are two directions to investigate this problem: one by fixing the prime $\mfp$, and the other by fixing $\varphi$.
        
        In the horizontal direction, for a fixed prime $\mfp \in \Omega_A$, we give explicit and easily verifiable conditions on Drinfeld $ A$-modules $\varphi$ of rank $2$ 
        which ensure the surjectivity of the $\mfp$-adic  Galois representation $\rho_{\varphi,\mfp}$. This work not only extends the work of~\cite{Ray24} for $\mfp=(T)$, but also obtains a variant of~\cite{Ray24} under comparatively simpler conditions in the case $\mfp=(T)$.

        In the vertical direction,  we show that for a fixed rank $2$ Drinfeld $A$-module $\varphi$, whose coefficients satisfy certain congruence and valuation conditions, the $\mfp$-adic Galois representation $\rho_{\varphi,\mfp}$ is surjective for all primes $\mfp \in \Omega_A$. This recovers the example of \cite{Zyw11} and yields new examples beyond those considered in \cite{Zyw25}. As a consequence, we obtain the surjectivity of the associated adelic Galois representation.
	\end{abstract}
	\maketitle

    \section{Introduction, Literature, and Statement of Results}
    Galois representations are one of the fundamental objects of study in arithmetic geometry. They typically arise from data associated with elliptic curves over number fields, elliptic modular forms, Drinfeld modular forms, etc. The trace and determinant of these representations at Frobenius elements capture essential information about the arithmetic properties of the respective object. In this article, we are interested in studying the surjectivity of $\mfp$-adic Galois representations attached to Drinfeld $A$-modules of rank $2$, where $\mfp \in \Omega_{A}:= \Spec(A) \setminus \{ (0) \}$.
	
	In the first part of the article, for a fixed $\mfp \in \Omega_A$, we give sufficient conditions on the coefficients of Drinfeld $A$-modules of rank $2$ ensuring 
    the surjectivity of the $\mfp$-adic Galois representation $\rho_{\varphi,\mfp}$ attached to $\varphi$ (Theorem~\ref{mfp_adic_surjectivity_main_theorem_1}). 
    
In the second part of the article, for a fixed Drinfeld $A$-module $\varphi$ of rank $2$ that satisfies some congruence conditions and valuations on the coefficients, we show the surjectivity of the $\mfp$-adic  Galois representation attached to $\varphi$ for all  $\mfp \in \Omega_A$ (Theorem~\ref{for_all_mfp_adic}). As a consequence,  we show the surjectivity of the adelic  Galois representation attached to $\varphi$ (Theorem~\ref{adelic_surjectivity}).
	
\subsection{Literature:}
	In this section, we first discuss the relevant literature for the elliptic curves over $\Q$.  Let $E$ be an elliptic curve over $\Q$. For every $n\in \N$, there exists a Galois representation $\bar{\rho}_{E,n}:\Gal(\bar{\Q}/\Q)\ra \Aut(E[n])\cong \GL_{2}(\Z/n\Z)$, where $E[n]$ is the $n$-torsion of $E(\bar{\Q})$[cf. \cite{Sil09} III.7]. For a prime $p \in \mathbb{P}$, the $p$-adic Tate module $T_p(E):=\varprojlim_{n}\ E[p^n]$ admits a natural
	$\Gal(\bar{\Q}/\Q)$-action which gives a representation
	$$\rho_{E,p}: \Gal(\bar{\Q}/\Q)\ra \Aut(T_p(E))\cong \GL_{2}(\Z_{p}).$$
	As a consequence, we obtain the adelic Galois representation
	$$\rho_{E}:\Gal(\bar{\Q}/\Q)\ra  \prod_{p}^{}\GL_{2}(\Z_{p})\cong\GL_{2}(\widehat{\Z}).$$
	The famous ``Open Image Theorem'' of Serre states that
	\begin{thm}[\cite{Ser72}]
		If $E$ is an elliptic curve over $\mathbb{Q}$ without complex multiplication, then the associated adelic Galois representation $\rho_{E}$ has an open image. In particular, $[\GL_{2}(\widehat{\Z}): \Ima(\rho_{E})] < \infty.$
	\end{thm}

     In the context of Drinfeld $A$-modules, Pink and Rütsche~\cite{PR09} studied the adelic Galois representations attached to Drinfeld $A$-modules, the function field analogues of elliptic curves, and proved the following result.
	\begin{thm}[\cite{PR09}]
		Let $\varphi$ be a Drinfeld $A$-module of rank $r$ such that $\varphi$ is generic.
		If $\End_{\bar{F}}(\varphi)=\varphi(A)$, then the image of the associated adelic Galois representation $\rho_{\varphi}(G_{F})$ is open in $\GL_{r}(\widehat{A})$. Equivalently,  $[\GL_{r}(\widehat{A}):\rho_{\varphi}(G_{F})]<\infty.$
	\end{thm}

\subsection{Basic Question:}
    A natural question in both settings is to determine when the image has index $1$; equivalently, when the associated adelic Galois representation is surjective. For elliptic curves over $\Q$, Serre proved in~\cite{Ser72} that the image always has index at least $2$ in $\GL_{2}(\widehat{\Z})$. In contrast, Greicius showed in~\cite{Gre10} that there exists a number field $K$ and an elliptic curve $E/K$ for which the adelic Galois representation is surjective.

   	In contrast, for Drinfeld $A$-modules, there are known examples with surjective adelic representations. For rank $1$, in~\cite{Hay74}, Hayes showed that the adelic Galois representation attached to the Carlitz $A$-module is surjective. For rank $2$, in~\cite{Zyw11}, Zywina constructed Drinfeld $A$-modules of rank $2$ defined by $\varphi_{T}=T+\tau-T^{q-1}\tau^2$ having surjective adelic Galois representations if $q \geq 5$ is an odd prime power. In a subsequent work~\cite{Zyw25}, he introduced four subsets of $A^2$ and showed that Drinfeld $A$-modules corresponding to elements in their intersection have surjective $\mfp$-adic Galois representations for every $\mfp\in \Omega_{A}$.

\subsection{Our contribution:}  In~\cite{Ray24}, under suitable conditions on the coefficients, 
the author proved surjectivity of the $(T)$-adic  Galois representation associated to a Drinfeld $A$-module of rank $2$. More precisely:
    \begin{thm*}[\cite{Ray24}, Theorem 3.1]
        \label{Ray24a_Main_Result_1}
		Let $q\geq 5$ be odd and $\eta\in \F_{q}^{\times}$ be a non-square, and let $a_{1}, a_{2}$ be distinct non-zero elements in $\F_{q}^{\times}$. Let $\varphi$ be a Drinfeld $A$-module of rank $2$ defined by $\varphi_{T}=T+g_{1}\tau+g_{2}\tau^2$, where $g_{1},g_{2}\in A$ such that
			\begin{itemize}
				\item $\nu_{(T)}(g_1)=0$, $\nu_{(T-a_{1})}(g_{1})\geq 1$, and $\nu_{(T-a_2)}(g_{1})=0$,
				\item $\nu_{(T)}(g_2)=0$, $g_{2}\equiv -a_{1}\eta^{-1}\pmod{(T-a_{1})}$ and $\nu_{(T-a_{2})}(g_{2})=1$.
			\end{itemize} 
		Then, the representation $\rho_{\varphi,(T)}$ is surjective. 
    \end{thm*}
    Now, in the horizontal direction, we state a main result of this article for an arbitrary prime $\mfp\in\Omega_{A}$.
    \begin{thm*}[Theorem~\ref{mfp_adic_surjectivity_main_theorem_1} in the text]
   Let $q\ge 5$ be odd and $\mfp\in \Omega_A$. Let $\varphi$ be a Drinfeld $A$-module of  rank $2$ defined by
           $$\varphi_T= T + g_1\tau - g_2^{\,q-1}\tau^2,$$
       where $g_1,g_2\in A$. Assume that $g_1$ and $g_2$ satisfy the following conditions:
       \begin{enumerate}
           \item There exists $c\in \F_q$ with $(T-c)\in \Omega_A\setminus\{\mfp\}$ such that $\nu_{(T-c)}(g_2)=0$, and the polynomial
               $$x^2 - g_1(c)x + (T-c)$$
           is irreducible over $\F_{\mfp}$,
           \item There exists $\mfl\in\Omega_{A}\setminus\{\mfp,(T-c)\}$ such that $ p\nmid \nu_{\mfl}(g_2)$ and $\nu_{\mfl}(g_1)=0$.
       \end{enumerate}
       Then, the associated $\mfp$-adic Galois representation $\rho_{\varphi,\mfp}$ is surjective.
    \end{thm*}
As an immediate consequence, we obtain the following corollary, which recovers~\cite[Theorem 3.1]{Ray24} in the case $\mfp=(T)$, but under comparatively simpler and easily verifiable conditions than those in~\cite[Theorem 3.1]{Ray24}.
    
    \begin{coro*}[Corollary~\ref{coro_variant_of_anwesh_T} in the text] 
    Let $q\geq 5$ be odd and let $\mfp\in \Omega_{A}$ be such that there exists $c\in \F_q$ with $c-T$ being non-square modulo $\mfp$. Let $\varphi$ be a Drinfeld $A$-module of rank $2$ defined by
		$$\varphi_{T}=T+g_{1}\tau-g_{2}^{q-1}\tau^2,$$
		where $g_{1},g_{2}\in A$. Assume that $g_1$ and $g_2$ satisfy the following conditions:
        \begin{enumerate}
            \item $\nu_{(T-c)}(g_2)=0$ and $\nu_{(T-c)}(g_1)\geq 1$,
            \item There exists $\mfl\in\Omega_{A}\setminus\{\mfp,(T-c)\}$ such that $ p\nmid \nu_{\mfl}(g_2)$ and $\nu_{\mfl}(g_1)=0$.
        \end{enumerate}
        Then the associated $\mfp$-adic Galois representation $\rho_{\varphi,\mfp}$ is surjective.
    \end{coro*}

   Now, in the vertical direction, we state our main results of this article.  let $\Lambda_{A}$ be the set of pairs $(\mfl,r_1)\in\Omega_{A}\times\F_{q}$ such that the polynomial $x^2-r_1x+(T-c)$ is irreducible over $\F_{\mfl}$ for some $c\in \F_{q}$.  
    \begin{thm*}[Theorem~\ref{for_all_mfp_adic} in the text]
        Let $q\geq 5$ be odd and $(\mfl,r_{1})\in \Lambda_{A}$. Let $\varphi$ be a Drinfeld $A$-module of rank $2$ defined by
		$$\varphi_{T}=T+g_{1}\tau-\mfl^{q-1}\tau^2,$$
        where $g_1\in A$ with $g_1\equiv r_1\pmod{T^q-T}$ and $\nu_{\mfl}(g_1)=0$.
		Then the $\mfp$-adic Galois representation $$\rho_{\varphi,\mfp}:G_{F}\longrightarrow \GL_2(A_{\mfp})$$
		is surjective for all $\mfp\in \Omega_{A}$.
    \end{thm*}
    Note that the Drinfeld modules constructed in Theorem~\ref{for_all_mfp_adic} not only generalize the example of Zywina~\cite{Zyw11} (corresponding to $(T,1)\in \widehat{\Lambda}_{A}$), but also yield new examples that lie outside the scope of~\cite{Zyw25}.

As a consequence, we obtain the surjectivity of the adelic Galois representation. To state, our result we define $\widehat{\Lambda}_{A}:=\left\{(\mfl,r_{1})\in\Lambda_{A}: r_1\in \F_{q}^{\times} \right\}.$  
    \begin{thm*}[Theorem~\ref{adelic_surjectivity} in the text]
        Let $q\geq 7$ be odd and $(\mfl,r_{1})\in \widehat{\Lambda}_{A}$. Let $\varphi$ be a Drinfeld $A$-module of rank $2$ defined by
		$$\varphi_{T}=T+g_{1}\tau-\mfl^{q-1}\tau^2,$$
        where $g_1\in A$ with $g_1\equiv r_1\pmod{T^q-T}$ and $\nu_{\mfl}(g_1)=0$. Then the adelic Galois representation $$\rho_{\varphi}:G_F\longrightarrow \GL_{2}(\widehat{A})$$
		is surjective.
    \end{thm*}

	\section{Preliminaries}
    Throughout this article, we stick to the following notations. Let $q$ be a prime power. Set $A:=\F_{q}[T]$ with field of fractions $F:=\F_{q}(T)$, $G_{F}:=\Gal(F^{\sep}/F)$, where $F^{\sep}$ is the separable closure of $F$ in $\bar{F}$.  Let $\widehat{A} :=\varprojlim_{\mfa \lhd A}A/\mfa$, where $\mfa$ runs over all non-zero ideals of $A$, denote the profinite completion of $A$. For a commutative ring $R$ with unity, $R^{\times}$ denotes the set of all units in $R$.
    To avoid notational complexity, we shall use $\mfa\subseteq A$ to denote both the generator and the ideal it generates. Let $\mfp\in \Omega_{A}$, $A_{\mfp}$ denote the completion of $A$ with respect to $\mfp$ with field of fractions $F_{\mfp}$, which is complete with respect to the normalized discrete valuation $\nu_p$, with the residue field $\F_{\mfp} := A_\mfp/\mfp A_\mfp(\cong A/\mfp)$, and absolute value $|\alpha|_{\mfp}:=|\F_{\mfp}|^{-\nu_{\mfp}(\alpha)}$. Also, $G_{F_{\mfp}}$ denotes the Galois group $\Gal(F_{\mfp}^{\sep}/F_{\mfp})$.


	\subsection{Drinfeld Modules} Let $K$ be an $A$-field $\gamma:A\ra K$. Let $K\{\tau\}$ be the ring of twisted polynomials, i.e., the elements of $K\{\tau\}$ are polynomials in the indeterminate $\tau$ with coefficients in $K$ that satisfy the usual addition of polynomials and the multiplication rule $(a\tau^i)(b\tau^j):=ab^{q^i}\tau^{i+j}$. For a twisted polynomial $f(\tau)=a_{h}\tau^h+a_{h+1}\tau^{h+1}+\cdots+a_{d}\tau^d  \in K\{\tau\}$
	with $0\leq h\leq d$ and $a_{h} a_{d}\neq 0$, define
	$$\mathrm{ht}_{\tau}(f):=h\ \mathrm{and}\ \deg_{\tau}(f):=d.$$
	
	\begin{dfn}[Drinfeld module]
		\label{Drinfeld-Definition}
		A Drinfeld $K$-module of rank $r\geq 1$ is an $\F_{q}$-algebra homomorphism
		\[\begin{aligned}
			\varphi: A& \lra K\{\tau\} \\
			a & \longmapsto \varphi_a=\gamma(a)+g_1(a)\tau+\cdots+g_{r \deg_T(a)}(a)\tau^{r \deg_T(a)},
		\end{aligned}\]
		with $g_{r\deg_T(a)}(a)\neq 0$ if $a \neq 0$. If the coefficients $g_1(a),g_2(a),\ldots,g_{r \deg_T(a)}(a)\in K$ of $\varphi_{a}$ all lie in $A$ for all $a\in A$, then $\varphi$ is called a Drinfeld $A$-module. Also, $\varphi$ is referred to as generic if $\ker(\gamma)=0$.
	\end{dfn}
	Since $\varphi$ is an $\F_q$-algebra homomorphism,  $\varphi_{T}$ uniquely determines $\varphi$.  In fact, for $a\in A$, the coefficients $g_i(a)$ in $\varphi_{a}$ can be explicitly described in terms of the coefficients of $a$ and $\varphi_T$ (cf. Proposition~\ref{expression_for_varphi_a} in the Appendix). 
    
    Conversely, to define a Drinfeld $K$-module $\varphi$ of rank $r$, it is enough to choose $g_1,\ldots, g_r\in K$ with $g_r \neq 0$ and define
    $$\varphi_{T}=\gamma(T)+g_{1}\tau+\cdots+g_{r}\tau^r,$$
	and extend uniquely to an $\F_q$-algebra homomorphism $\varphi$ from $A$ to $K\{\tau\}$. Clearly, $\varphi$ is determined by $(g_1,\ldots, g_r)$.  

	\begin{exple}[Carlitz module]
        A classical example of a Drinfeld $F$-module of rank $1$, called the Carlitz module and denoted by $C$, is defined by $C_T = T + \tau$.
	\end{exple}
	

	\begin{dfn}[Morphism]
        Let $\varphi$ and $\psi$ be Drinfeld $F$-modules. A morphism $u:\varphi\ra \psi$ is a polynomial $u\in F\{\tau\}$ such that $u\varphi_T=\psi_T u$. We say $\varphi$ and $\psi$ are isomorphic over $F$ if there exists $c\in F^\times$ with $\psi_T=c\varphi_T c^{-1}$.
	\end{dfn}
	

	\subsection{Galois representations attached to Drinfeld modules}
	Before we discuss the Galois representations attached to a generic Drinfeld $F$-module $\varphi$ of rank $r$, we need to understand the torsion subgroups of Drinfeld modules.
    
	\subsubsection{Torsion subgroups attached to  Drinfeld modules}
    For each $a\in A$, the element $\varphi_a$ corresponds to an $\F_q$-linear polynomial $\varphi_{a}(x)=\gamma(a)x+\sum_{i=1}^{r\deg_{T}(a)}g_i(a)x^{q^i}$. Then $F^{\sep}$ acquires a (twisted) $A$-module structure via $b\cdot \alpha:=\varphi_{b}(\alpha)$ for all $b\in A$ and $\alpha\in F^{\sep}$, where $\varphi_b(\alpha)$ denotes the evaluation of the $\F_q$-linear polynomial $\varphi_b(x)$ at $\alpha$. 
    
    For any non-zero polynomial $a\in A$, the $a$-torsion of $\varphi$ is defined by
	$$\varphi[a]:=\{\alpha\in F^{\sep}|a\cdot \alpha=0 \}= \{\alpha\in F^{\sep}|\varphi_{a}(\alpha)=0 \}.$$
	 Note that, $\varphi[a]$ is a finite dimensional $\F_{q}$-vector space. In fact, $\varphi[a]$ is an $A$-submodule of $F^{\sep}$ since for any
	$b\in A$ and $\alpha\in \varphi[a]$, we have
	$$\varphi_{a}(\varphi_{b}(\alpha))=\varphi_{ab}(\alpha)=\varphi_{ba}(\alpha)=\varphi_{b}(\varphi_{a}(\alpha))=0.$$
For any non-zero ideal $\mathfrak{a}\subseteq A$ with monic generator $a$, we define $\varphi[\mfa]:= \varphi[a]$. By~\cite[Corollary 3.5.3]{Pap23}, the module $\varphi[\mfa]\cong (A/\mfa)^r$, since $\varphi$ is generic. 
	
	\subsubsection{Galois representations}

    The absolute Galois group $G_F$ acts on $\varphi[\mfa]$ by permuting the roots of $\varphi_a(x)$. This action commutes with the $A$-module structure of $\varphi[\mfa]$, since for $\sigma\in G_F$, $b\in A$, and $\alpha\in \varphi[\mfa]$, we have $\varphi_b(\sigma\alpha)=\sigma(\varphi_b(\alpha))$. Thus, $G_F$ acts via $A$-module automorphisms, giving rise to the mod-$\mfa$ Galois representation
    \begin{equation}\label{mod_p_rp}
        \bar{\rho}_{\varphi,\mfa}:G_F \longrightarrow \Aut_A(\varphi[\mfa]) \cong \GL_r(A/\mfa).
    \end{equation}
    For $\mfp\in \Omega_{A}$, the natural maps 
    \begin{equation}\label{transition_maps}
       \varphi[\mfp^{n+1}]\xrightarrow{\times_{\mfp}}\varphi[\mfp^n], \quad \alpha\mapsto \mfp\cdot \alpha=\varphi_{\mfp}(\alpha)
	\end{equation}
	are surjective. Taking inverse limits, we obtain the $\mfp$-adic Tate module of $\varphi$
	$$T_{\mfp}[\varphi]:=\varprojlim_{n}\varphi[\mfp^n]\cong \varprojlim_{n}(A/\mfp^n)^r\cong A_{\mfp}^r.$$
	The $G_F$-action on each $\varphi[\mfp^n]$ is compatible with the maps~\eqref{transition_maps}, hence induces the $\mfp$-adic Galois representation
	\begin{align} \label{p_adic_rp}
		\rho_{\varphi,\mfp}:G_{F}\ra \Aut_{A_{\mfp}}(T_{\mfp}(\varphi))\cong \GL_{r}(A_{\mfp}).
	\end{align}
	Combining these representations, we obtain the adelic Galois representation
	\begin{align} \label{adelic}
		\rho_{\varphi}:G_F\longrightarrow \GL_{r}(\widehat{A})\cong \prod_{\mfp\in \Omega_{A}}\GL_{r}(A_{\mfp}).
	\end{align}

	\subsection{Stable/Good reduction of a Drinfeld module \texorpdfstring{$\varphi$}:}
	
	For any Drinfeld $F$-module $\varphi$ and $\lambda\in \Omega_{A}$, the localized Drinfeld $F_\lambda$-module $\varphi_{\lambda}$ is defined by the composite
	$$\varphi_{\lambda}:A\xrightarrow{\varphi} F\{\tau\}\ra F_{\lambda}\{\tau\},$$
	where the second map arises from the inclusion $F\hookrightarrow F_{\lambda}$.
	
	\begin{dfn}[Stable/Good reduction]
		The Drinfeld $F$-module $\varphi$ of rank $r$ is said to have stable reduction at $\lambda$ if there exists a Drinfeld $A_{\lambda}$-module $\psi$, isomorphic to $\varphi_{\lambda}$ over $F_\lambda$, such that its reduction
		$$\bar{\psi}:A\lra \F_{\lambda}\{\tau\},$$
		is a Drinfeld $\F_{\lambda}$-module. The rank of $\bar{\psi}$ is called the reduction rank of $\varphi$ at $\lambda$. If the reduction rank of $\varphi$ at $\lambda$  is equal to $r$, then we say $\varphi$ has good reduction at $\lambda$.
	\end{dfn}
    


	\subsubsection{Height of a Drinfeld module:}
    Let $\varphi$ be a Drinfeld $F$-module of rank $r$ with good reduction at $\lambda \in \Omega_A$, and consider the reduced Drinfeld module
       $$\bar{\psi}:A \longrightarrow \F_\lambda\{\tau\}.$$
    Since $\lambda$ lies in the kernel of the reduction map $\gamma:A\ra \F_{\lambda}$, the constant term of $\bar{\psi}_{\lambda}$ is $0$, and hence $\mathrm{ht}_{\tau}(\bar{\psi}_{\lambda})>0$. The height of $\bar{\psi}$ is defined by
	$$H_{\lambda}(\bar{\psi}):=\frac{\mathrm{ht}_{\tau}(\bar{\psi}_\lambda)}{\deg_{T}(\lambda)}.$$
	We refer to this as the reduction height of $\varphi$ at $\lambda$ and denote it by $H_\lambda$ if $\varphi$ is clear from the context. By~\cite[Lemma 3.2.11]{Pap23}, the integer $H_{\lambda}(\bar{\psi}) \in [1, r]$.


    \section{Horizontal $\mfp$-adic surjectivity of Drinfeld modules}

In this section, for a fixed prime $\mfp \in \Omega_A$, we construct rank $2$ Drinfeld $A$-modules $\varphi$ whose coefficients satisfy certain $\mfp$-adic valuation conditions, and for which the associated $\mfp$-adic Galois representation $\rho_{\varphi,\mfp}$ is surjective.

	\begin{thm}
    \label{mfp_adic_surjectivity_main_theorem_1}
       Let $q\ge 5$ be odd and let $\mfp\in \Omega_A$. Let $\varphi$ be a Drinfeld $A$-module of rank $2$ defined by
           $$\varphi_T= T + g_1\tau - g_2^{\,q-1}\tau^2,$$
       where $g_1,g_2 \in A$. Assume that $g_1$ and $g_2$ satisfy the following conditions:
       \begin{enumerate}
           \item There exists $c\in \F_q$ with $(T-c)\in \Omega_A\setminus\{\mfp\}$ such that $\nu_{(T-c)}(g_2)=0$, and the polynomial
               $$x^2 - g_1(c)x + (T-c)$$
           is irreducible over $\F_{\mfp}$,
           \item There exists $\mfl\in\Omega_{A}\setminus\{\mfp,(T-c)\}$ such that $ p\nmid \nu_{\mfl}(g_2)$ and $\nu_{\mfl}(g_1)=0$.
       \end{enumerate}
       Then, the associated $\mfp$-adic Galois representation $\rho_{\varphi,\mfp}$ is surjective.
    \end{thm}
    As an immediate consequence of Theorem~\ref{mfp_adic_surjectivity_main_theorem_1}, we obtain the following corollary.
    \begin{cor}\label{coro_variant_of_anwesh_T}
        Let $q\geq 5$ be odd and let $\mfp\in \Omega_{A}$ be such that there exists $c\in \F_q$ with $c-T$ being non-square modulo $\mfp$. Let $\varphi$ be a Drinfeld $A$-module of rank $2$ defined by
		$$\varphi_{T}=T+g_{1}\tau-g_{2}^{q-1}\tau^2,$$
		where $g_{1},g_{2}(\neq 0)\in A$. Assume that $g_1$ and $g_2$ satisfy the following conditions:
        \begin{enumerate}
            \item $\nu_{(T-c)}(g_2)=0$ and $\nu_{(T-c)}(g_1)\geq 1$,
            \item There exists $\mfl\in\Omega_{A}\setminus\{\mfp,(T-c)\}$ such that $ p\nmid \nu_{\mfl}(g_2)$ and $\nu_{\mfl}(g_1)=0$.
        \end{enumerate}
        Then the associated $\mfp$-adic Galois representation $\rho_{\varphi,\mfp}$ is surjective.
    \end{cor}
    The proof of Theorem~\ref{mfp_adic_surjectivity_main_theorem_1} is based on the following proposition by Pink and R\"utsche (cf. \cite[Proposition 4.1]{PR09}).
       \begin{prop}
       \label{Pink_R_cond_T_adic_sur}
       Let $\mfp\in \Omega_A$. Let $M$ be a closed subgroup of $\GL_{r}(A_{\mfp})$ such that
		$\det(M)=A_{\mfp}^{\times}$. Assume that
		$|\F_{\mfp}|\geq 4$. Suppose $M\equiv\GL_{r}(\F_{\mfp})\pmod \mfp$, and $M$ mod $\mfp^2$ contains a non-scalar matrix which is congruent to the identity modulo $\mfp$. Then $M=\GL_{r}(A_{\mfp})$.
	\end{prop}
    We will show that the image $M:=\rho_{\varphi,\mfp}(G_F)$, where $\varphi$ is a Drinfeld $A$-module as in Theorem~\ref{mfp_adic_surjectivity_main_theorem_1}, satisfies the hypotheses of Proposition~\ref{Pink_R_cond_T_adic_sur}. In this direction, as a first step, we establish the following.
    \begin{prop}\label{det_of_mfp_adic_rep_sujective}
           Let $\varphi$ be as in Theorem~\ref{mfp_adic_surjectivity_main_theorem_1}. Then the determinant map
           $$\det\rho_{\varphi,\mfp}:G_{F}\ra A_{\mfp}^{\times}$$
           is surjective, i.e., $\det(M)=A_{\mfp}^{\times}$.
    \end{prop}
	The proof of Proposition~\ref{det_of_mfp_adic_rep_sujective} requires the following result from Hayes~\cite{Hay74}.
    \begin{prop}[\cite{Hay74}]
		\label{Hayes}
		For every non-zero ideal $\mathfrak{a}$ of $A$, the representation
		$$\bar{\rho}_{C,\mathfrak{a}}:G_{F}\ra \Aut(C[\mathfrak{a}])\cong (A/\mathfrak{a})^{\times}$$
		is surjective, and the adelic Galois representation of the Carlitz module is surjective. Moreover, for $\mfp\in\Omega_A$ such that $\mfp\nmid\mfa$, we have $\bar{\rho}_{C,\mathfrak{a}}(\Frob_{\mfp})\equiv \mfp \pmod{\mathfrak{a}}$. 
	\end{prop}
    \begin{proof}[Proof of Proposition~\ref{det_of_mfp_adic_rep_sujective}]
       	By~\cite[Theorem 3.7.1(3)]{Pap23}, we get $\det(M) =\rho_{\psi,\mfp}(G_{F})$, where $\psi_{T}=T+g_2^{q-1}\tau$. Since $g_2\psi_{T}=C_{T}g_2$, then $\psi$ and $C$ are isomorphic over $F$. Hence, by Proposition~\ref{Hayes}, we have $\det(M)=\rho_{\psi,\mfp}(G_{F})=\rho_{C,\mfp}(G_{F})=A_\mfp^{\times}$.
    \end{proof}
    To verify that $M$ satisfies the remaining hypotheses of Proposition~\ref{Pink_R_cond_T_adic_sur}, we first analyze the subgroup $\bar{\rho}_{\varphi, \mfp}(I_{\mfl})$ of $\overline{M}:=\bar{\rho}_{\varphi,\mfp}(G_F)\subseteq\GL_{2}(\F_\mfp)$, where $I_{\mfl}$ denotes the inertia subgroup of $G_F$ at $\mfl$.
    \begin{lem}\label{unipotent_contained_in_image_of_inertia}
		Let $\varphi, \mfl$ be as in Theorem~\ref{mfp_adic_surjectivity_main_theorem_1}. Then there exists a basis of $\varphi[\mfp]$ such that 
        $$
			\bar{\rho}_{\varphi,\mfp}(I_\mfl)=\left\{ \begin{pmatrix} 1 & b_{\sigma,1} \\ 0 & 1 \end{pmatrix} \; \middle| \; b_{\sigma,1} \in \F_{\mfp}\right\}.
		$$
        In particular, $|\F_{\mfp}|=|A/\mfp|=q^{\deg_{T}(\mfp)}$ divides $|\overline{M}|$.
	\end{lem}
    \begin{proof}
        The Drinfeld $A$-module $\varphi$ has stable reduction of rank $1$ at $\mfl$, since $\mfl\mid g_2$ and $\mfl\nmid g_1$. By~\cite[Proposition 4.1]{Zyw11}, there exists a basis of $\varphi[\mfp]$ such that 
        \begin{align}
        \label{inertia_contained_in_a_nice_group}
        \bar{\rho}_{\varphi,\mfp}(I_\mfl)\subseteq \left\{
			\psmat{1}{b_{\sigma,1}}{0}{b_{\sigma,2}} \; \middle| \; b_{\sigma,1} \in \F_{\mfp}, b_{\sigma,2}\in \F_{q}^{\times}\right\}
        \end{align}
        and $|\bar{\rho}_{\varphi,\mfp}(I_\mfl)|\geq e_{\varphi}$, where $e_{\varphi}$ is the order of $\frac{1}{(q-1)q^{\deg_{T}(\mfp)}}\nu_{\mfl}\Big(\frac{g_1^{q+1}}{-g_2^{q-1}}\Big)+\mathbb{Z}$ in $\mathbb{Q}/\mathbb{Z}$. Since $\nu_{\mfl}(g_1)=0$, and $p\nmid \nu_{\mfl}(g_2)$, we get $e_{\varphi}\geq q^{\deg_{T}(\mfp)}$, and hence
        $|\bar{\rho}_{\varphi,\mfp}(I_\mfl)|\geq q^{\deg_{T}(\mfp)}$. 

        By~\cite[Theorem 3.7.1(1)]{Pap23}, we have $\det\bar{\rho}_{\varphi,\mfp}(G_{F})=\bar{\rho}_{\psi,\mfp}(G_{F})$, where $\psi_{T}=T+g_2^{q-1}\tau$. Since $g_2\psi_{T}=C_{T}g_2$, $\psi$ and $C$ are isomorphic over $F$, and hence we have $\det\bar{\rho}_{\varphi,\mfp}(G_{F})=\bar{\rho}_{C,\mfp}(G_{F})$. The Carlitz module $C$ has good reduction at $\mfl$ and $\mfl\neq \mfp$, hence we have 
        $\bar{\rho}_{C,\mfp}$ is unramified at $\mfl$, i.e., $\bar{\rho}_{C,\mfp}(I_{\mfl})=1$. Therefore, we get $\det\bar{\rho}_{\varphi,\mfp}(I_{\mfl})=1$. Now, the claim follows from~\eqref{inertia_contained_in_a_nice_group} together with the inequality $|\bar{\rho}_{\varphi,\mfp}(I_\mfl)|\geq q^{\deg_{T}(\mfp)}$.
    \end{proof}

        In~\cite[Proposition 4.1]{Zyw11}, the description of $\bar{\rho}_{\varphi,\mfa}(I_{\mfl})$ was stated for any arbitrary non-zero ideal $\mfa\subseteq A$. However, a closer inspection of the proof shows that~\cite[Proposition 4.1]{Zyw11} requires an additional assumption that $(\mfl, \mfa)=1$.       
        In particular, for $\mfa=\mfp^2$, we have the following.



    

    \begin{prop}
    \label{non-scalar_mod_T_square_2}
        Let $\varphi$ be as in Theorem~\ref{mfp_adic_surjectivity_main_theorem_1}. Then the image of $\bar{\rho}_{{\varphi,\mfp^2}}$, i.e., $M$ mod $\mfp^2$, contains a non-scalar matrix that is congruent to the identity modulo $\mfp$.
    \end{prop}
    \begin{proof}
        The preceding argument shows that the proof of Lemma~\ref{unipotent_contained_in_image_of_inertia} applies to any non-zero ideal $\mfa\subseteq A$ that is prime to $\mfl$. Therefore, taking $\mfa=\mfp^2$ gives
        $$ \bar{\rho}_{\varphi,\mfp^2}(I_\mfl)=\left\{
			\psmat{1}{b_{\sigma,1}}{0}{1}\; \middle| \; b_{\sigma,1} \in A/\mfp^2\right\},$$
        which proves the claim.
    \end{proof}
   The only remaining hypothesis to verify in Proposition~\ref{Pink_R_cond_T_adic_sur} is that the representation $\bar{\rho}_{\varphi,\mfp}$ is surjective, i.e., $M\equiv\GL_{2}(\F_{\mfp})\pmod{\mfp}$. By Proposition~\ref{det_of_mfp_adic_rep_sujective}, we get $\det(\overline{M})=\F_{\mfp}^{\times}$. The required claim follows if we can show
        that $\SL_{2}(\F_{\mfp})\subseteq \overline{M}$, which can be achieved by~\cite[Lemma A.1]{Zyw11}. 
        \begin{lem}
        \label{zyw_sl_{2}_F}
        Let $G$ be a subgroup of $\GL_2(\F)$, where $\F$ is a finite field. If $G$ contains a Sylow $p$-subgroup of $\GL_2(\F)$ and $G$ acts irreducibly on $\F^2$, then $G$ contains $\SL_{2}(\F)$. 
     	\end{lem}
       By Lemma~\ref{unipotent_contained_in_image_of_inertia}, the group $\overline{M}$ contains the Sylow $p$-subgroup, the group of all unipotent upper triangular matrices in $\GL_2(\F_q)$. By Lemma~\ref{zyw_sl_{2}_F}, either we have 
       $\SL_2(\F_\mfp)\subseteq\overline{M}$  or the action of $\overline{M}$ on $\varphi[\mfp]\cong \F_{\mfp}^2$ is reducible, which cannot happen by the following Proposition. 

    \begin{prop}
        \label{irreducible_rep_at_mfp}
        Let $\varphi$ be as in Theorem~\ref{mfp_adic_surjectivity_main_theorem_1}. Then $\overline{M}$ acts irreducibly on $\varphi[\mfp]$, i.e., the $\F_{\mfp}[G_F]$-module $\varphi[\mfp]$ is irreducible.
    \end{prop}
    \begin{proof}
        On the contrary, suppose that the $\F_{\mfp}[G_F]$-module $\varphi[\mfp]$ is reducible. By assumption, the module $\varphi$ has a good reduction at $(T-c)\in \Omega_{A}\setminus\{\mfp\}$, therefore $\rho_{\varphi,\mfp}$ is unramified at $(T-c).$ So, the matrix $\rho_{\varphi,\mfp}(\Frob_{(T-c)})\in \GL_{2}(A_\mfp)$ is well-defined up to conjugation, where $\Frob_{(T-c)}$ denotes the Frobenius at $(T-c)$. Moreover, its characteristic polynomial $P_{\varphi,(T-c)}(x)=\det(xI_{2}-\rho_{\varphi,\mfp}(\Frob_{(T-c)}))=x^2+a_1x+a_0$ belongs to $A[x]$.
  By~\cite[Theorem 4.2.7 (2, 3)]{Pap23}, we have $a_1\in \F_{q}$ and
        $$a_{0}=-\Nr_{\F_{(T-c)/\F_q}}(-g_2(c)^{q-1})^{-1}\cdot(T-c)=(T-c).$$
        Therefore, $P_{\varphi,(T-c)}(x)=x^2+a_{1}x+(T-c)\in A[x]$, where $a_{1}\in \F_{q}$. Since $P_{\varphi,(T-c)}(x)$ is also the characteristic polynomial of the Frobenius endomorphism of
		$\varphi\otimes\F_{(T-c)}$, the reduction of $\varphi_T$ modulo $(T-c)$, acting on $T_{\mfp}(\varphi\otimes\F_{(T-c)})$, we get
		\begin{equation}\label{ch_mod_l_eq_T}
			\tau^2+(\varphi\otimes\F_{(T-c)})_{a_{1}}\tau+(\varphi\otimes\F_{(T-c)})_{T-c}=0.
		\end{equation}
		Since $\varphi_{T}=T+g_{1}\tau-g_{2}^{q-1}\tau^2$, we have $(\varphi\otimes\F_{(T-c)})_{T}=c+g_{1}(c)\tau-g_{2}(c)^{q-1}\tau^2=c+g_{1}(c)\tau-\tau^2$ (as $g_{2}(c)\in \F_{q}^{\times}$). Therefore, $(\varphi\otimes\F_{(T-c)})_{T-c}=g_{1}(c)\tau-\tau^2$. Hence, by~\eqref{ch_mod_l_eq_T}, $a_{1}=-g_{1}(c)$. So, we have $$P_{\varphi,(T-c)}(x)=x^2-g_{1}(c)x+(T-c)\in A[x].$$ 
        
        Since the characteristic polynomial of $\bar{\rho}_{\varphi,\mfp}(\Frob_{(T-c)})$ is congruent to $P_{\varphi,(T-c)}(x)$ modulo $\mfp$, the polynomial $x^2-g_{1}(c)x+(T-c)$ is reducible over $\F_{\mfp}$ because $\varphi[\mfp]$ is reducible. This contradicts our assumption, and hence the claim follows.  
    \end{proof}

    Since $M:=\rho_{\varphi,\mfp}(G_F)$ satisfies the hypotheses of Proposition~\ref{Pink_R_cond_T_adic_sur}, we complete a proof of Theorem~\ref{mfp_adic_surjectivity_main_theorem_1}. We conclude this section with several remarks that highlight comparisons between our results and those in~\cite{Ray24}.

    \subsection{Comparison with~\cite{Ray24}}
    \begin{itemize}
        \item In this article, for an arbitrary $\mfp \in \Omega_A$, we provide easily verifiable sufficient conditions on the coefficients of $\varphi_T$ that ensure the $\mfp$-adic surjectivity of the associated Galois representation. In~\cite{Ray24}, a variant of our results was shown for the prime $\mfp = (T)$ with a hypothesis different from ours. 
        \item The approach in~\cite{Ray24} relies on working with Drinfeld $A$-modules of reduction height $1$ at $(T)$ (cf.~\cite[Proposition 2.3]{Ray24}). For $\mfp \in \Omega_A$ with $\deg_T(\mfp) \gg 1$, a similar approach forces us to have 
        stronger divisibility conditions on the coefficients of $\varphi_{\mfp}$. To allow
        $\mfp \in \Omega_A$ with $\deg_T(\mfp) \gg 1$,  we consider Drinfeld $A$-modules $\varphi_T = T + g_1\tau - g_2^{\,q-1}\tau^2$, thereby allowing the reduction height at most $2$ at $(T)$,  and weaker congruence conditions, compared to~\cite[Theorem 3.1]{Ray24}, would imply 
        $\mfp$-adic surjectivity.
        
        \item Our method relies on weaker valuation conditions. To prove the Lemma~\ref{unipotent_contained_in_image_of_inertia} and Proposition~\ref{non-scalar_mod_T_square_2}, for any $\mfp \in \Omega_A$, we allow an auxiliary  $\mfl \in \Omega_A \setminus \{\mfp,(T-c)\}$ satisfying $\nu_{\mfl}(g_1)=0$ and $p \nmid \nu_{\mfl}(g_2)$. In contrast, in~\cite{Ray24}, for the case $\mfp = (T)$ to achieve the same (cf.~\cite[Theorem 3.1 and Proposition 3.5]{Ray24}), Ray used a degree-$1$ prime $(T-a_2) \in \Omega_A$ with stronger valuation conditions, namely $\nu_{(T-a_2)}(g_1)=0$ and $\nu_{(T-a_2)}(g_2)=1$.
    \end{itemize}
	
	\section{Vetical $\mfp$-adic surjectivity of Drinfeld modules}
    In this section, we show that for a fixed rank $2$ Drinfeld $A$-module $\varphi$, whose coefficients satisfy certain congruence and valuation conditions, the $\mfp$-adic Galois representation $\rho_{\varphi,\mfp}$ is surjective for all primes $\mfp \in \Omega_A$.

Let $\Lambda_{A}$ be the set of pairs $(\mfl,r_{1}) \in \Omega_A \times \F_q$ such that the polynomial $x^2-r_1 x+(T-c)$ is irreducible over $\F_\mfl$ for some $c\in \F_q$. Clearly, we have $\mfl\neq (T-c)$.

    \begin{lem}\label{Lambda_A_is_non_empty}
        The set $\Lambda_{A}$ is non-empty.
        \end{lem}
        \begin{proof}
        For any $\mfl=(T-d)\in \Omega_{A}$ and $r_1\in\F_{q}^{\times}$, we show that there exist $c\in \F_{q}\setminus\{d\}$ such that  $f_{c}(x):=x^2-r_{1}x+(T-c)$ is irreducible over $\F_{(T-d)}$, i.e., $\bar{f}_{c}(x):=x^2-r_{1}x+(d-c)\in \F_{q}[x]$ is irreducible.

        Assume that for all $c\in \F_{q}\setminus\{d\}$, the polynomial $\bar{f}_{c}(x)=x^2-r_{1}x+(d-c)\in \F_{q}[x]$ is reducible, i.e., it has roots in $\F^\times_{q}$, since $d\neq c$. Since the roots of $\bar{f}_{c_1}(x)$ and $\bar{f}_{c_2}(x)$, for $c_1 \neq c_2$, are distinct, every element in $\F^\times_{q}$ must be a root of some $\bar{f}_{c}(x)$ for $c\in \F_{q}\setminus\{d\}$. This is a contradiction, since $r_1$ is not a root of $\bar{f}_{c}(x)$ for any $c\in \F_q\setminus\{d\}$. Therefore, $(\mfl, r_1) \in \Lambda_{A}$ for all $\mfl\in\Omega_{A}$ with $\deg_{T}(\mfl)=1$ and $r_1\in \F_{q}^{\times}$.
      \end{proof}

 Now, we state the Main Theorem of this section.
    \begin{thm}
    \label{for_all_mfp_adic}
        Let $q\geq 5$ be odd and $(\mfl,r_{1})\in \Lambda_{A}$. Let $\varphi$ be a Drinfeld $A$-module of rank $2$ defined by
		$$\varphi_{T}=T+g_{1}\tau-\mfl^{q-1}\tau^2,$$
        where $g_1\in A$ with $g_1\equiv r_1\pmod{T^q-T}$ and $\nu_{\mfl}(g_1)=0$.
		Then, the $\mfp$-adic Galois representation $$\rho_{\varphi,\mfp}:G_{F}\longrightarrow \GL_2(A_{\mfp})$$
		is surjective for all $\mfp\in \Omega_{A}$.
    \end{thm}
    We remark that the Drinfeld $A$-modules in Theorem~\ref{for_all_mfp_adic} not only generalise the example in~\cite{Zyw11}  but also produce complementary examples considered in~\cite{Zyw25}.  

    \begin{prop}
    \label{infinitely_many_g_1}
        Let $(\mfl,r_1)\in \Lambda_A$ with either $\deg_T(\mfl)>1$ or $r_1 \in \F_q^\times$. Then there exist infinitely many $g \in A$ such that
        $$g  \equiv r_1 \pmod{T^q - T}\quad \text{and} \quad \nu_{\mfl}(g)=0.$$
    \end{prop}
    \begin{proof}
        For $(\mfl,0) \in \Lambda_{A}$ with $\deg_{T}(\mfl)>1$, one can take $g$ to be the positive integral powers of $(T^q-T)$. For $r_1 \in \F_q^\times$, define an infinite set $$\mathcal{S}_{r_1}:=\{g\in A:g\equiv r_1\pmod{T^q-T}\}.$$
        We now show that there exist infinitely many $g \in \mathcal{S}_{r_1}$ with
        $\nu_{\mfl}(g)=0$. For $\deg_{T}(\mfl)=1$, every $g \in \mathcal{S}_{r_1}$ satisfies
        $\nu_{\mfl}(g)=0$ as $r_{1} \in \F_q^\times$.
        
        For $\deg_{T}(\mfl)>1$, assume to the contrary that there are only finitely many $g\in \mathcal{S}_{r_1}$ with $\nu_{\mfl}(g)=0$. For $n\geq 1$, define $h^{(n)}:=r_1+(T^q-T)T^n \in \mathcal{S}_{r_1}$. Now, if $\mfl\mid h ^{(n+1)}$ and $\mfl\mid h^{(n)}$, then $\mfl\mid (h^{(n+1)}-h^{(n)})=(T^{q}-T)T^{n}(T-1)$, which cannot be true since $\deg_{T}(\mfl)>1$. This implies that there are infinitely many $n \in \N$ with $\nu_\mfl (h^{(n)}) =0$, which is a contradiction. Hence, this completes the proof.
    \end{proof}        
    Therefore, by Lemma~\ref{Lambda_A_is_non_empty} and Proposition~\ref{infinitely_many_g_1}, there are infinitely many examples satisfying Theorem~\ref{for_all_mfp_adic}.

   \subsection{Notations and relevant results:}

   Let $\varphi$ be a Drinfeld $A$-module of rank $2$ defined by $\varphi_{T}=T+g_{1}\tau+g_{2}\tau^2$ and has good reduction at $\mfp \in \Omega_A$ with height $H_{\mfp}$. Set $n_\mfp :=q^{H_{\mfp}\deg_{T}(\mfp)}$. The Newton polygon of
	$\frac{\varphi_{\mfp}(x)}{x}$ implies that $\varphi[\mfp]$ contains $n_{\mfp}-1$  elements with valuation $\frac{1}{n_{\mfp}-1}$ and the remaining elements have valuation $0$. Let $V$ be the set of elements with valuation $\frac{1}{n_{\mfp}-1}$. 
    
    We define $\varphi[\mfp]^{0} :=\{\alpha\in\varphi[\mfp]:|\alpha|_{\mfp}<1\}=V\cup\{0\},$ which is an $A$-submodule of $\varphi[\mfp]$. This is because  if $|\alpha|_{\mfp}<1$ then $|\varphi_a(\alpha)|_{\mfp} < 1$, for any $a \in A$, as $\varphi_a(x) \in A[x]$.
    Note that, $\varphi[\mfp]^{0}\cong (\F_\mfp)^{H_{\mfp}}$.
	Moreover, the submodule $\varphi[\mfp]^{0}$ is $G_{F_{\mfp}}$-invariant,
	since  $G_{F_{\mfp}}$ preserves the absolute value on $F_{\mfp}^{\sep}$. The commutativity of these actions gives rise to a Galois representation
	$$\bar{\rho}^{0}_{\varphi,\mfp}:G_{F_{\mfp}}\rightarrow \Aut_{A}(\varphi[\mfp]^{0}).$$
	In particular, $\varphi[\mfp]^{0}$ is an $\F_{\mfp}[G_{F_{\mfp}}]$-module. 
    
    Let $\bar{\varphi}$ be the reduction of $\varphi$ at $\mfp$ and $\bar{\varphi}[\mfp]$ be its $\mfp$-torsion. Since $\varphi$ has  good reduction at $\mfp$, we get
	$\varphi[\mfp]^{\text{\'{e}t}} = \bar{\varphi}[\mfp] \cong(\F_\mfp)^{2-H_{\mfp}}$.  We have the following short exact sequence of $\F_{\mfp}[I_{\mfp}]$-modules (cf.~\cite[Section 6.3]{Pap23})
    $$0\lra \varphi[\mfp]^{0}\lra \varphi[\mfp]\lra \bar{\varphi}[\mfp]\lra 0.$$
		
	Let $F_{\mfp}(V)$ be the fixed field of the kernel of $\bar{\rho}^{0}_{\varphi,\mfp}$, which is tamely ramified over $F_{\mfp}$  with ramification index $n_\mfp-1$. Moreover, $F_{\mfp}(V)$ is the splitting field of $f(x)=x^{n_\mfp}-\pi_{\mfp}x$ over $F_{\mfp}$, where $\pi_{\mfp}$ is a uniformizer of $F_{\mfp}$. The Galois group $\Gal(F_{\mfp}(V)/F_{\mfp})$ is isomorphic to $\F_{\mfp}^{(H_{\mfp})}$ by the fundamental character $\zeta_{n_{\mfp}}$, which is defined as follows:
	\[
	\zeta_{n_{\mfp}} : \operatorname{Gal}(F_{\mfp}(V)/F_{\mfp}) \to \F_{\mfp}^{(H_{\mfp})}, \quad \sigma \mapsto \sigma(\lambda)/\lambda,
	\]
	where $\lambda$ is a root of $f(x)$, where $\F_{\mfp}^{(H_\mfp)}$ be the extension of $\F_{\mfp}$ with $n_\mfp$ elements.
	Now, we recall a proposition of Pink and R\"utsche (cf.~\cite[Proposition 2.7]{PR09}).
	\begin{prop}
		\label{PR_09}
		\begin{enumerate}
			\item The inertia group $I_{\mfp}$ acts trivially on $\bar{\varphi}[\mfp]$.
            \item The action of the wild inertia group at $\mfp$ on $\varphi[\mfp]^{0}$ is trivial.
			
			\item The $\F_{\mfp}$-vector space $\varphi[\mfp]^{0}$ extends uniquely to a one dimensional $\F_{\mfp}^{(H_{\mfp})}$-vector space structure such that the action of $I_{\mfp}$ on $\varphi[\mfp]^{0}$ is given by the fundamental character $\zeta_{n_\mfp}$.
			
		\end{enumerate}
	\end{prop}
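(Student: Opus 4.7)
The plan is to prove the three parts in turn. The crucial input throughout is that $n_\mfp = q^{H_\mfp \deg(\mfp)}$ is a power of the residue characteristic $p$, so $n_\mfp - 1$ is coprime to $p$; this alone forces all inertia actions on $\varphi[\mfp]^0$ to be tame.

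For part (1), I would use the Newton-polygon description already set up in the excerpt: the nonzero elements of $\varphi[\mfp]^0$ are exactly the roots of $\varphi_\mfp(x)/x$ lying on the single segment of slope $-1/(n_\mfp-1)$, and $F_\mfp(V)$ is their splitting field over $F_\mfp$. Adjoining any one such root produces a totally ramified extension of degree $n_\mfp - 1$; since this degree is coprime to $p$, the full extension $F_\mfp(V)/F_\mfp$ is tame. The wild inertia subgroup, being the pro-$p$ part of $I_\mfp$, therefore maps trivially to $\Gal(F_\mfp(V)/F_\mfp)$ and hence acts trivially on $\varphi[\mfp]^0$.

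For part (2), good reduction at $\mfp$ yields a Drinfeld $k_\mfp$-module $\bar\varphi$ together with a reduction map $\varphi[\mfp] \to \bar\varphi[\mfp]$ whose kernel is $\varphi[\mfp]^0$; the resulting $G_{F_\mfp}$-equivariant identification $\bar\varphi[\mfp] \cong \varphi[\mfp]/\varphi[\mfp]^0$ takes values in $k_\mfp^{\sep}$. Hence the action of $G_{F_\mfp}$ on $\bar\varphi[\mfp]$ factors through the unramified quotient $G_{F_\mfp}/I_\mfp \cong \Gal(k_\mfp^{\sep}/k_\mfp)$, which shows that $I_\mfp$ acts trivially on $\bar\varphi[\mfp]$.

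Part (3) is the most delicate and I expect it to be the main obstacle. By (1) the tame inertia action on $V$ factors through $\Gal(F_\mfp(V)/F_\mfp)$, and a Newton-polygon analysis of $\varphi_\mfp(x)/x$ (coupled with transitivity of Galois on $V$) shows that this Galois group is cyclic of order exactly $n_\mfp - 1 = |(\F_\mfp^{(H_\mfp)})^\times|$. Fixing $\lambda_0 \in V$, the assignment $\sigma\lambda_0 \mapsto \zeta_{n_\mfp}(\sigma)$ defines an $I_\mfp$-equivariant bijection $V \to (\F_\mfp^{(H_\mfp)})^\times$, and extending by $0 \mapsto 0$ transports the field structure of $\F_\mfp^{(H_\mfp)}$ onto $\varphi[\mfp]^0$. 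The crux is to verify that the $\F_\mfp$-module structure inherited from this field agrees with the original $\F_\mfp$-structure on $\varphi[\mfp]^0$; this reduces to the statement that $\zeta_{n_\mfp}$ restricts to the identity character on the scaling automorphisms $\varphi_c$ for $c \in \F_\mfp^\times \hookrightarrow (\F_\mfp^{(H_\mfp)})^\times$, which is a direct property of the fundamental character. Uniqueness then follows because any two such $\F_\mfp^{(H_\mfp)}$-structures extending the given $\F_\mfp$-one differ by an $\F_\mfp$-algebra automorphism of $\F_\mfp^{(H_\mfp)}$; requiring the inertia action to be $\zeta_{n_\mfp}$ itself (rather than a Frobenius twist) pins that automorphism down to the identity.
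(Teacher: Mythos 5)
The paper does not prove this proposition: it is stated as a direct recall of Pink--R\"utsche~\cite[Proposition~2.7]{PR09}, so there is no internal argument to compare your proposal against. What follows is therefore an assessment of your reconstruction on its own terms.

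Parts (1) and (2) are sound. For (1), the ramification index $n_\mfp - 1$ is coprime to $p$, so $F_\mfp(V)/F_\mfp$ is tamely ramified and wild inertia acts trivially on $\varphi[\mfp]^0$; your valuation argument for why $F_\mfp(\alpha)/F_\mfp$ is totally ramified of degree $n_\mfp - 1$ is correct. For (2), the identification $\bar\varphi[\mfp] \cong \varphi[\mfp]/\varphi[\mfp]^0$ and the fact that this lands inside $k_\mfp^{\sep}$ indeed make the quotient unramified.

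Part (3) is where the content lies, and your sketch has two genuine problems. First, the claim that $\Gal(F_\mfp(V)/F_\mfp)$ is cyclic of order $n_\mfp-1$ is only correct when $H_\mfp = 1$: for $H_\mfp > 1$ the splitting field of $x^{n_\mfp}-\pi_\mfp x$ also acquires the roots of unity $\mu_{n_\mfp-1}$, which produces a residue extension of degree $H_\mfp$, so the full Galois group has order $H_\mfp(n_\mfp-1)$. What is cyclic of order $n_\mfp - 1$ is its \emph{inertia subgroup}, and that is also where $\zeta_{n_\mfp}$ is an honest homomorphism (on the whole group it is only a $1$-cocycle). Second, and more seriously, transporting the field structure of $\F_\mfp^{(H_\mfp)}$ along a set bijection $V \cup \{0\} \to \F_\mfp^{(H_\mfp)}$ installs a \emph{new additive structure} on $\varphi[\mfp]^0$, and nothing in your argument shows it agrees with the $\F_q$-linear addition $\varphi[\mfp]^0$ already carries as a subgroup of $F_\mfp^{\sep}$. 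Your stated ``crux'' reduces the problem to the behaviour of $\zeta_{n_\mfp}$ on scalar action, which addresses only the $\F_\mfp$-multiplication and not the addition; and indeed $I_\mfp$-equivariance of a set bijection does not force it to be additive. A route that avoids this gap is to note that a generator of the cyclic tame inertia quotient acts on $\varphi[\mfp]^0$ as an $\F_\mfp$-linear automorphism of multiplicative order $n_\mfp - 1$, and the $\F_\mfp$-subalgebra of $\mathrm{End}_{\F_\mfp}(\varphi[\mfp]^0)$ it generates is then automatically a field isomorphic to $\F_\mfp^{(H_\mfp)}$ acting $\F_\mfp$-linearly; the one-dimensional structure and the additive compatibility are then built in, and uniqueness follows from your Frobenius-twist observation.
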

	\begin{cor}[\cite{Che22}, Corollary 5.2] \label{ch_PR09}
		$\varphi[\mfp]^{0}$ is an irreducible $\F_{\mfp}[I_{\mfp}]$-module.
	\end{cor}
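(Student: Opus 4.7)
The plan is to leverage Proposition~\ref{PR_09}(3), which promotes $\varphi[\mfp]^{0}$ from an $H_{\mfp}$-dimensional $\F_{\mfp}$-vector space to a $1$-dimensional $\F_{\mfp}^{(H_{\mfp})}$-vector space on which $I_{\mfp}$ acts through the fundamental character $\zeta_{n_{\mfp}}$. Once this structure is available, irreducibility is essentially a statement about the image of $\zeta_{n_{\mfp}}$ in $(\F_{\mfp}^{(H_{\mfp})})^{\times}$.

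First, I would observe that the restriction of $\zeta_{n_{\mfp}}$ to $I_{\mfp}$ is surjective onto $(\F_{\mfp}^{(H_{\mfp})})^{\times}$. This is built into the preceding discussion: the extension $F_{\mfp}(V)/F_{\mfp}$ is tame with ramification index $n_{\mfp}-1$, so the image of inertia in $\Gal(F_{\mfp}(V)/F_{\mfp})$ is a cyclic group of order $n_{\mfp}-1$; the fundamental character embeds it into the group of $(n_{\mfp}-1)$-th roots of unity in $\F_{\mfp}^{(H_{\mfp})}$, which itself has cardinality exactly $n_{\mfp}-1 = \#(\F_{\mfp}^{(H_{\mfp})})^{\times}$. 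Matching orders then forces the map to be surjective.

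Second, I would deduce irreducibility directly. Let $W \subseteq \varphi[\mfp]^{0}$ be a non-zero $\F_{\mfp}[I_{\mfp}]$-submodule and pick $v \in W \setminus \{0\}$. By the surjectivity just established, for every $\alpha \in (\F_{\mfp}^{(H_{\mfp})})^{\times}$ there exists $\sigma \in I_{\mfp}$ with $\sigma\cdot v = \zeta_{n_{\mfp}}(\sigma)\, v = \alpha v$, so $\alpha v \in W$. Together with $0 \in W$, this yields $\F_{\mfp}^{(H_{\mfp})}\, v \subseteq W$. But Proposition~\ref{PR_09}(3) says $\varphi[\mfp]^{0}$ is one-dimensional over $\F_{\mfp}^{(H_{\mfp})}$, so $\F_{\mfp}^{(H_{\mfp})}\, v = \varphi[\mfp]^{0}$, and hence $W = \varphi[\mfp]^{0}$.

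I do not anticipate any serious obstacle in this argument: the only point needing care is ensuring that the enriched $\F_{\mfp}^{(H_{\mfp})}$-structure supplied by Proposition~\ref{PR_09}(3) is genuinely compatible with the a priori $\F_{\mfp}$-linear inertia action (so that any $\F_{\mfp}$-subspace stabilized by $I_{\mfp}$ becomes, via the previous paragraph, stable under scaling by all of $\F_{\mfp}^{(H_{\mfp})}$). This compatibility is exactly the content of part (3) of that proposition, so no additional work is required.
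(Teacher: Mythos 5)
Your argument is correct and is the expected derivation; the paper itself does not supply a proof but defers to~\cite{Che22}, and what you write is precisely how that corollary follows from Proposition~\ref{PR_09}(3). The two ingredients you isolate are exactly the right ones: tameness plus the cardinality match $n_{\mfp}-1=\#(\F_{\mfp}^{(H_{\mfp})})^{\times}$ makes $\zeta_{n_{\mfp}}\vert_{I_{\mfp}}$ a bijection onto $(\F_{\mfp}^{(H_{\mfp})})^{\times}$, so any nonzero $\F_{\mfp}[I_{\mfp}]$-submodule is automatically an $\F_{\mfp}^{(H_{\mfp})}$-subspace of a one-dimensional $\F_{\mfp}^{(H_{\mfp})}$-space, hence all of $\varphi[\mfp]^{0}$.
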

	
    \subsection{Proof of Theorem~\ref{for_all_mfp_adic}}
    
	As before, the proof of Theorem~\ref{for_all_mfp_adic} relies on Proposition~\ref{Pink_R_cond_T_adic_sur}, and hence it is enough to show that $M_{\mfp}:=\rho_{\varphi,\mfp}(G_F)$ satisfies the hypotheses of Proposition~\ref{Pink_R_cond_T_adic_sur} for all $\mfp\in\Omega_{A}$. First, arguing as in Proposition~\ref{det_of_mfp_adic_rep_sujective}, we get
    \begin{prop}\label{det_of_mfp_adic_rep_sujective_for_all}
        Let $\varphi$ be as in Theorem~\ref{for_all_mfp_adic}. Then the determinant map
        $$\det\rho_{\varphi,\mfp}:G_{F}\ra A_{\mfp}^{\times}$$
        is surjective, i.e., $\det(M_{\mfp})=A_{\mfp}^{\times}$ for all $\mfp\in\Omega_{A}$.
    \end{prop}
    For all $\mfp\in\Omega_{A}$, to verify $M_{\mfp}$ satisfies the remaining hypotheses of Proposition~\ref{Pink_R_cond_T_adic_sur}, we first  analyze the subgroup $\bar{\rho}_{\varphi, \mfp}(I_{\mfl})$ of $\overline{M}_{\mfp}:=\bar{\rho}_{\varphi,\mfp}(G_F)\subseteq\GL_{2}(\F_\mfp)$.
    
    \begin{lem}
       \label{unipotent_contained_in_image_of_inertia_for_all_mfp}
		Let $\varphi$ be as in Theorem~\ref{for_all_mfp_adic}. Then,  for all $\mfp\in\Omega_{A}$, we have $|\F_\mfp|=q^{\deg_{T}(\mfp)}$ divides $|\overline{M}_{\mfp}|$. Moreover, for $\mfp\neq \mfl$, there exists a basis of $\varphi[\mfp]$ such that 
        $$\bar{\rho}_{\varphi,\mfp}(I_\mfl)=\left\{ \begin{pmatrix} 1 & b_{\sigma,1} \\ 0 & 1 \end{pmatrix} \; \middle| \; b_{\sigma,1} \in \F_{\mfp}\right\}.$$
        In particular, for all $\mfp\in\Omega_{A}$, $\overline{M}_{\mfp}$ contains a Sylow $p$-subgroup of $\GL_{2}(\F_\mfp)$. 
    \end{lem}
    \begin{proof}
        We first assume $\mfp=\mfl$. Recall that \begin{align}\label{image_of_decomposition_at_mfl_via_mod_mfl_rep}
			\bar{\rho}_{\varphi,\mfl}(G_{F_{\mfl}}) \cong G_{F_{\mfl}}/\Gal\left(F^{\sep}_{\mfl}/F_{\mfl}(\varphi[\mfl])\right)\cong \Gal\left(F_{\mfl}(\varphi[\mfl])/F_{\mfl}\right),
		\end{align}
		where $G_{F_{\mfl}}$ is the decomposition subgroup of $G_{F}$ at $\mfl \in \Omega_A$ and $F_{\mfl}(\varphi[\mfl])$ is the smallest extension of $F_{\mfl}$ such that $\Gal(F_{\mfl}^{\sep}/F_{\mfl}(\varphi[\mfl]))$ acts trivially on $\varphi[\mfl]$. 
        
        For $0 \neq a \in A$, we let $\nu_{\mfl}^{a}$ denote the unique valuation on $F_{\mfl} (\varphi[a])$ extending $\nu_{\mfl}$. By definition, we have $\nu_{\mfl}^{a}(F_{\mfl}\left(\varphi[a])^{\times}\right)=\dfrac{1}{e[F_{\mfl}(\varphi[a]):F_{\mfl}]}\Z$, where $e[F_{\mfl}(\varphi[a]): F_{\mfl}]$ denotes the ramification index of $F_{\mfl}(\varphi[a])/F_{\mfl}$.
				
		Let $l= \deg_{T}(\mfl)$ and hence $\deg_{\tau}(\varphi_{\mfl})=2l$. Since $\mfl\nmid g_1$, we get  $\bar{\varphi}_{T}=\bar{T}+\bar{g}_{1}\tau$ be a Drinfeld $\F_{\mfl}$-module of rank $1$, where the bar denotes the reduction modulo $\mfl$. Now, $\deg_{\tau}(\bar{\varphi}_{\mfl})=l$. Hence, we have 
        \begin{align}
            \nu_{\mfl}(\text{coefficient of }\tau^{l}\text{ in }\varphi_{\mfl})&= 0, \label{valuation_of_coeff_of_varphi_mfl_at_tau_cap_l} \\
            \nu_{\mfl}(\text{coefficient of }\tau^{k}\text{ in }\varphi_{\mfl})&\geq 1 \quad \text{for all } l<k\leq 2l.\label{valuation_of_coeff_of_varphi_mfl_at_tau_cap_k}
        \end{align}
        By Corollary~\ref{expression_of_varphi_a_with_explicit_coefficients}, coefficient of $\tau^{l+1}$ in $\varphi_{\mfl}$ is
			$$\sum_{j=\big\lceil \tfrac{l+1}{2} \big\rceil}^{l}a_{j}\Big(\sum_{\substack{0\leq i_1,i_2,\ldots,i_j\leq 2 \\ i_1+i_2+\cdots+i_j=l+1}}^{}g_{i_{1}}g_{i_{2}}^{q^{i_{1}}}g_{i_{3}}^{q^{i_1+i_2}}\cdots g_{i_{j}}^{q^{i_1+i_2+\cdots+i_{j-1}}}\Big),$$
        where $g_2=-\mfl^{q-1}$ and $g_0=T$.

         Observe that, among all tuples $(i_1,i_2,\ldots,i_j)$, the minimal valuation $q-1$ occurs precisely for the tuple $(2,1,1,\ldots,1)$, i.e., $i_1=2$ and the remaining $(l-1)$ entries are $1$. In this case, the factor $g_2$ appears without any Frobenius twist and contributes valuation $q-1$. Any other placement of the entry $2$ increases the valuation (at least $q(q-1)$) due to Frobenius twisting. Moreover, all other tuples corresponding to partitions of $l+1$ contain at least two entries equal to $2$, and hence contribute terms of valuation at least $(1+q^2)(q-1)$. Therefore, the minimal valuation $q-1$ occurs uniquely, hence we get $\nu_{\mfl}(\text{coefficient of}\ \tau^{l+1} \ \text{in}\ \varphi_{\mfl})=q-1$. We now look at Newton's polygon of $\varphi_{\mfl}(x)/x\in A[x]$. 
        \begin{itemize}
           \item Since all coefficients of $\varphi_{\mfl}(x)/x$ have non-negative valuation at $\mfl$, by~\eqref{valuation_of_coeff_of_varphi_mfl_at_tau_cap_l} and \eqref{valuation_of_coeff_of_varphi_mfl_at_tau_cap_k}, there must be a line segment starting at $(q^l-1,0)$.

            \item For each $1<i\leq l$, arguing as above for the coefficient of $\tau^{l+i}$, we see that the point $(q^{l+i}-1,\nu_{\mfl}(\text{coefficient of}\ \tau^{l+i} \ \text{in}\ \varphi_{\mfl}))$ lies above the line joining $(q^l-1,0)$ and $(q^{l+1}-1,q-1)$.  Therefore,  the Newton polygon of $\varphi_{\mfl}(x)/x$ contains a line segment joining $(q^l-1,0)$ and $(q^{l+1}-1,q-1)$.
        \end{itemize}
    So, $\varphi_{\mfl}(x)/x$ has $q^{l+1}-q^{l}$ many roots $\alpha \in F_{\mfl}(\varphi[\mfl])$ with $\nu_{\mfl}^{\mfl}(\alpha)=-\frac{(q-1)}{q^{l}(q-1)}=-\frac{1}{q^l}$, and hence $q^{l}=|A/\mfl|$ divides $e[F_{\mfl}(\varphi[\mfl]): F_{\mfl}]$. Since  $e[F_{\mfl}(\varphi[\mfl]): F_{\mfl}]$ divides $|\Gal(F_{\mfl}(\varphi[\mfl])/F_{\mfl})|$, $q^{l}$ divides $|\Gal(F_{\mfl}(\varphi[\mfl])/F_{\mfl})|=|\bar{\rho}_{\varphi,\mfl}(G_{F_{\mfl}})|$. In particular, we have $q^{l}$ divides $|\overline{M}_{\mfl}|$.
        
    Now, for $\mfp \neq \mfl$, arguing as in the proof of Lemma~\ref{unipotent_contained_in_image_of_inertia}, the claim follows.
    \end{proof}

    \begin{prop}
    \label{non-scalar_mod_mfp_square_for_all_mfp}
        Let $\varphi$ be as in Theorem~\ref{for_all_mfp_adic}. For all $\mfp\in\Omega_{A}$, the image  $\bar{\rho}_{{\varphi,\mfp^2}}(G_{F})$, i.e., $M_{\mfp}  \pmod {\mfp^2}$, contains a non-scalar matrix which is congruent to the identity modulo $\mfp$.
    \end{prop}
    \begin{proof}
        We first assume $\mfp=\mfl$ and $l = \deg_{T}(\mfl)$. In this case, arguing as in the proof of Lemma~\ref{unipotent_contained_in_image_of_inertia_for_all_mfp}, we get $\varphi_{\mfl^2}(x)/x$ has $q^{2l+1}-q^{2l}$ many roots in $F_{\mfl}(\varphi[\mfl^2])$ with valuation $-\frac{1}{q^{2l}}$ and $|A/\mfl^2|=q^{2l}$ divides $|\bar{\rho}_{\varphi,\mfl^2}(G_{F_{\mfl}})|$.
        Now, consider the surjective group homomorphism
			$$ \bar{\rho}_{\varphi, \mfl^2}(G_F) \xrightarrow{\text{mod } \mfl} \bar{\rho}_{\varphi, \mfl}(G_F) \cong \GL_{2}(\F_{\mfl})$$
		with kernel $S$. Note that, $S \cap \bar{\rho}_{\varphi,\mfl^2}(G_{F_{\mfl}})$ is non-trivial, because $q^{2l}$ divides $|\bar{\rho}_{\varphi,\mfl^2}(G_{F_{\mfl}})|$ but not $|\GL_{2}(\F_{\mfl})|$.
        
        We now show that at least one non-trivial element in $S \cap \bar{\rho}_{\varphi,\mfl^2}(G_{F_{\mfl}})\subseteq  \GL_2(A/{\mfl^2})$ cannot belong to $Z(\GL_2(A/{\mfl^2}))$.  Suppose, for contradiction, this is not the case. Then,
        for all $\sigma \in G_{F_{\mfl}} $ such that $\bar{\rho}_{\varphi,\mfl^2}(\sigma) \in S$ is a non-trivial scalar matrix $(1+a_\sigma\mfl)I_2$ for some $a_\sigma=x_0+x_1T+\cdots+x_{a}T^a\in A\setminus\{0\}$ with $\deg_{T}(a_\sigma)=a<l$. 

    Let  $\alpha$ be a root of $\varphi_{\mfl^2}(x)/x$ with $\nu_{\mfl}^{\mfl^2}(\alpha)= -\frac{1}{q^{2l}}$. We compute
			\begin{align}\label{valuation_of_sigma_of_alpha}
				\nu_{\mfl}^{\mfl^2}(\sigma(\alpha))
                &=\nu_{\mfl}^{\mfl^2}\big(\varphi_{1+a_\sigma \mfl}(\alpha)\big)\notag
				 =\nu_{\mfl}^{\mfl^2}\big(\alpha+(\varphi_{a_\sigma}\varphi_{\mfl})(\alpha)\big)\notag \\
				&=\nu_{\mfl}^{\mfl^2}\Big(\alpha+\big((x_0+x_1\varphi_{T}+\cdots+x_{a}\varphi_{T^a})\varphi_{\mfl}\big)(\alpha)\Big)\notag\\
				&=\nu_{\mfl}^{\mfl^2}\Big(\alpha+\big(x_0\varphi_{\mfl}+x_1\varphi_{T}\varphi_{\mfl}+\cdots+x_{a}\varphi_{T^a}\varphi_{\mfl}\big)(\alpha)\Big)\notag\\
				&=\nu_{\mfl}^{\mfl^2}\Big(\alpha+x_0\varphi_{\mfl}(\alpha)+x_1(\varphi_{T}\varphi_{\mfl})(\alpha)+\cdots+x_{a}(\varphi_{T^a}\varphi_{\mfl})(\alpha)\Big)\notag\\
				&\leq \min\Big\{\nu_{\mfl}^{\mfl^2}(\alpha),\nu_{\mfl}^{\mfl^2}\big(x_0\varphi_{\mfl}(\alpha)\big),\ldots,\nu_{\mfl}^{\mfl^2}\big(x_{a}(\varphi_{T^a}\varphi_{\mfl})(\alpha)\big)\Big\}
			\end{align}
        By~\eqref{valuation_of_coeff_of_varphi_mfl_at_tau_cap_l}, we get 
        $\nu_{\mfl}^{\mfl^2}(\text{coefficient of}\ \tau^{l} \ \text{in}\ \varphi_{\mfl}) = 0$, and hence
			\begin{align}\label{valuation_of_x_0_varphi_mfl_of_alpha}
				\nu_{\mfl}^{\mfl^2}\big(x_0\varphi_{\mfl}(\alpha)\big)&\leq \nu_{\mfl}^{\mfl^2}\big(x_0(\text{coefficient of}\ \tau^{l} \ \text{in}\ \varphi_{\mfl})(\alpha)^{q^{l}}\big)\notag\\
				&=\nu_{\mfl}^{\mfl^2}(x_0)+\nu_{\mfl}^{\mfl^2}(\text{coefficient of}\ \tau^{l} \ \text{in}\ \varphi_{\mfl})+\nu_{\mfl}^{\mfl^2}((\alpha)^{q^{l}})\notag\\
					&=q^{l}\nu_{\mfl}^{\mfl^2}(\alpha)=- \frac{1}{q^{l}}<-\frac{1}{q^{2l}} \quad \text{as } l\geq 1. 
				\end{align}
    By \eqref{valuation_of_sigma_of_alpha} and \eqref{valuation_of_x_0_varphi_mfl_of_alpha}, we get $\nu_{\mfl}^{\mfl^2}(\sigma(\alpha))<-\frac{1}{q^{2l}}$, which is a contradiction, 
    since $\sigma  \in G_{F_{\mfl}}$ preserves valuation. Therefore, $M_{\mfl}\pmod{\mfl^2}$ contains a non-scalar matrix which is congruent to the identity modulo $\mfl$.
        
        For $\mfp \neq \mfl$, arguing as in the proof of Lemma~\ref{non-scalar_mod_T_square_2}, the claim follows. 
    \end{proof}
    
    Now, for all $\mfp\in \Omega_{A}$, the only hypothesis remaining to verify in Proposition~\ref{Pink_R_cond_T_adic_sur} is that the representation $\bar{\rho}_{\varphi,\mfp}$ is surjective, i.e., $M\equiv\GL_{2}(\F_{\mfp})\pmod{\mfp}$. Since
    $\det(\overline{M}_{\mfp})=\F_{\mfp}^{\times}$ (cf. Proposition~\ref{det_of_mfp_adic_rep_sujective_for_all}), the claim follows if we show $\SL_{2}(\F_{\mfp})\subseteq \overline{M}_{\mfp}$. 
    
    By Lemma~\ref{unipotent_contained_in_image_of_inertia_for_all_mfp}, the group $\overline{M}_{\mfp}$ contains a Sylow-$p$ subgroup of $\GL_{2}(\F_{p})$. By Lemma~\ref{zyw_sl_{2}_F}, we have either $\SL_2(\F_\mfp)\subseteq\overline{M}_{\mfp}$  or the action of $\overline{M}_{\mfp}$ on $\varphi[\mfp]\cong \F_{\mfp}^2$ is reducible, which cannot be possible by the following proposition.

    \begin{prop}
        \label{irreducible_rep_for_all_mfp}
        Let $\varphi$ be as in Theorem~\ref{for_all_mfp_adic}. Then $\overline{M}_{\mfp}$ acts irreducibly on $\varphi[\mfp]$, i.e., the $\F_{\mfp}[G_F]$-module $\varphi[\mfp]$ is irreducible for all $\mfp\in\Omega_{A}$.
    \end{prop}
    \begin{proof}
        On the contrary, $\varphi[\mfp]$ is reducible for some $\mfp \in \Omega_A$. Then, there is a proper $\F_{\mfp}[G_{F}]$-submodule $X$ of $\varphi[\mfp]$ such that $X$ has $\F_{\mfp}$-dimension $1$. Hence, there exists a basis, say $\{v_1,v_2\}$ of $\varphi[\mfp]$, such that the action of $G_F$ on $\varphi[\mfp]$ is of the form $\psmat{\chi}{*}{0}{\chi^\prime}$, where $\chi, \chi^\prime:G_{F}\ra \F_\mfp^{\times}$ are characters. 
		
		$\bullet$ Assume $\mfp=\mfl$. Since $(\mfl, r_1) \in \Lambda_A$, there exists $c \in \F_q$ such that the polynomial $x^2-r_{1}x+(T-c)$ is irreducible over $\F_{\mfl}$. Note that $\varphi$ has a good reduction at $(T-c)$, and hence  $\rho_{\varphi,\mfl}$ is unramified at $(T-c)$, because $\mfl \neq (T-c) $. Now, arguing as in the proof of Proposition~\ref{irreducible_rep_at_mfp}, the  characteristic polynomial of $\bar{\rho}_{\varphi,\mfl}(\Frob_{(T-c)})$ is $x^2-r_1x+\overline{(T-c)}\in \F_{\mfl}[x]$. Since $\varphi[\mfl]$ is reducible, the polynomial $x^2-r_{1}x+(T-c)$ is reducible over $\F_{\mfl}$, which is a contradiction.

        $\bullet$ Assume $\mfp\neq\mfl$. Inspired by~\cite{Che22}, we proceed with the proof, we first need to understand the ramification behaviour of the characters $\chi$ and $\chi^\prime$.
	\begin{lem}
		The characters $\chi$ and $\chi^\prime$ are unramified at all $\lambda\in \Omega_{A}\setminus\{\mfp\}$. One of these two characters is unramified at all $\lambda\in \Omega_{A}$.
	\end{lem}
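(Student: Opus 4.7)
The plan is to analyse the ramification of $\chi$ and $\chi'$ at each $\lambda\in\Omega_A$ in three sub-cases: $\lambda\in\Omega_A\setminus\{\mfl,\mfp\}$, $\lambda=\mfl$, and $\lambda=\mfp$. Since $-l^{q-1}$ is a unit modulo every $\lambda\neq\mfl$, $\varphi$ has good reduction at every such $\lambda$. For $\lambda\in\Omega_A\setminus\{\mfl,\mfp\}$ we then have $\lambda\nmid\mfp$, so the remark on good reduction recalled in the preliminaries yields that $\bar\rho_{\varphi,\mfp}$ is unramified at $\lambda$, and both $\chi|_{I_\lambda}$ and $\chi'|_{I_\lambda}$ are trivial.

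For $\lambda=\mfl$, one has $\nu_\mfl(g_1)=0$ (because $g_1\in A\setminus\mfl$) and $\nu_\mfl(-l^{q-1})=q-1>0$, so $\varphi$ has stable reduction of rank $1$ at $\mfl$. Proposition~\ref{Zyw_im_inertia}(1) provides a basis of $\varphi[\mfp]$ in which $\bar\rho_{\varphi,\mfp}(I_\mfl)\subseteq\{\psmat{1}{b}{0}{c}:b\in A/\mfp,\ c\in\F_q^\times\}$; write $\bar c\colon I_\mfl\to\F_q^\times$ for the resulting diagonal character on the quotient. In the reducibility basis the same representation is upper triangular with diagonal $(\chi|_{I_\mfl},\chi'|_{I_\mfl})$, so by Jordan--H\"older applied to the $I_\mfl$-module $\varphi[\mfp]$ one obtains the equality of multisets $\{\chi|_{I_\mfl},\chi'|_{I_\mfl}\}=\{1,\bar c\}$. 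By Theorem~\ref{dm_iso} applied with $g_r=-l^{q-1}$, $\det\bar\rho_{\varphi,\mfp}=\bar\rho_{\psi,\mfp}$ where $\psi_T=T+l^{q-1}\tau$, and the identity $l\cdot\psi_T=C_T\cdot l$ exhibits an isomorphism $\psi\cong C$ over $F$. Hence $\det\bar\rho_{\varphi,\mfp}=\bar\rho_{C,\mfp}$, which is unramified at $\mfl\neq\mfp$ by Proposition~\ref{Hayes}. Thus $\bar c=\chi|_{I_\mfl}\cdot\chi'|_{I_\mfl}=1$, forcing both $\chi|_{I_\mfl}$ and $\chi'|_{I_\mfl}$ to be trivial; this completes the first assertion.

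For the second assertion consider $\lambda=\mfp$. Since $\mfp\neq\mfl$, $\varphi$ has good reduction at $\mfp$ with height $H_\mfp\in\{1,2\}$. If $H_\mfp=2$ then $\varphi[\mfp]^0=\varphi[\mfp]$, which by Corollary~\ref{ch_PR09} is irreducible as an $\F_\mfp[I_\mfp]$-module, contradicting the reducibility of $\varphi[\mfp]$ as an $\F_\mfp[G_F]$-module. Hence $H_\mfp=1$, and the connected-\'etale sequence \[0\to\varphi[\mfp]^0\to\varphi[\mfp]\to\bar\varphi[\mfp]\to 0\] has $1$-dimensional ends, with $I_\mfp$ acting trivially on $\bar\varphi[\mfp]$ by Proposition~\ref{PR_09}(2). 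Let $L$ be the $G_F$-stable line on which $G_F$ acts by $\chi$. If $L=\varphi[\mfp]^0$, then $G_F$ acts on the quotient via $\chi'$, giving $\chi'|_{I_\mfp}=1$; otherwise $L\cap\varphi[\mfp]^0=0$ and the projection $L\to\bar\varphi[\mfp]$ is an $I_\mfp$-isomorphism, forcing $\chi|_{I_\mfp}=1$. In either case one of the two characters is unramified at $\mfp$. The main obstacle will be the $\mfl$-case: the inertia-adapted basis of Proposition~\ref{Zyw_im_inertia} and the reducibility basis are a priori unrelated, and it is precisely the Jordan--H\"older comparison combined with the Hayes-type unramifiedness of $\det\bar\rho_{\varphi,\mfp}=\bar\rho_{C,\mfp}$ that eliminates the unknown tame character $\bar c$.
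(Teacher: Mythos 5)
Your proof is correct and follows the same three-case decomposition ($\lambda\notin\{\mfl,\mfp\}$, $\lambda=\mfl$, $\lambda=\mfp$) as the paper; the first and third cases are handled identically (good reduction for the former; height-$2$ contradiction via Corollary~\ref{ch_PR09} and the connected--\'etale filtration for the latter). The one genuine difference is the $\lambda=\mfl$ case. The paper simply invokes Corollary~\ref{mod_a_im_unipotent}(2) to get $\#\bar\rho_{\varphi,\mfp}(I_\mfl)=q^{\deg\mfp}$ and then observes that this $p$-group must map trivially under any character into $(A/\mfp)^\times$, a group of order $q^{\deg\mfp}-1$ coprime to $q$. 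You instead work directly from Proposition~\ref{Zyw_im_inertia}(1), compare composition factors of the $I_\mfl$-module $\varphi[\mfp]$ across the two bases via Jordan--H\"older to get $\{\chi|_{I_\mfl},\chi'|_{I_\mfl}\}=\{1,\bar c\}$, and then kill $\bar c$ by matching determinants with $\bar\rho_{C,\mfp}$, which is unramified at $\mfl$ by Hayes. Both routes ultimately rest on the same determinant computation (it is hidden inside the proof of Corollary~\ref{mod_a_im_unipotent}(2) in the paper), but yours has the minor advantage of not needing the cardinality lower bound from Proposition~\ref{Zyw_im_inertia}(2), while the paper's coprimality argument is shorter once the Corollary is available. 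Either way the conclusion is the same and the argument is sound.
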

	\begin{proof}
		We first show that the characters $\chi$ and $\chi^\prime$ are unramified at all non-zero prime ideals $\lambda\neq \mfp$.
		\begin{itemize}
			\item $\lambda=\mfl$: By Lemma~\ref{unipotent_contained_in_image_of_inertia_for_all_mfp}, we have $|\bar{\rho}_{\varphi, \mfp}(I_\lambda)|=q^{\deg_{T}(\mfp)}$. But the characters $\chi$ and $\chi^\prime$ takes values in $\F_\mfp^{\times}$, whose cardinality, $q^{\deg_{T}(\mfp)}-1$, is relatively prime to $q$. Therefore, $\chi(I_\lambda)=\chi^\prime(I_\lambda)=1$.
			\item  $\lambda\neq \mfl$: Since $\varphi$ has a good reduction at $\lambda$, $\bar{\rho}_{\varphi, \mfp}$  is unramified at $\lambda$ and hence $\chi$ and $\chi^\prime$.
		\end{itemize}
		We now show that one of these two characters is unramified at $\mfp$. We can handle this in two possible sub-cases:
		\begin{itemize}
			\item Suppose the reduction of $\varphi$ modulo $\lambda$ has height $1$. Since the submodule $\varphi[\mfp]^{0}\cong \F_{\mfp}$, $\varphi[\mfp]^{0}$ is an $1$-dimensional $\F_{\mfp}[G_{F}]$-submodule of $\varphi[\mfp]$. Suppose $V_\chi=\spn\{v_{1}\}$ is an $\F_{\mfp}[G_{F}]$-submodule of $\varphi[\mfp]$ corresponding to $\chi$. Then $V_\chi=\varphi[\mfp]^{0}$ or $V\cap \varphi[\mfp]^{0}=0$.
			
			\begin{itemize}
				\item If $V_\chi=\varphi[\mfp]^{0}$, then $I_\mfp$ acts on
				$\varphi[\mfp]/\varphi[\mfp]^0\cong \bar{\varphi}[\mfp]$ by $\chi^\prime$. Now, by Proposition \ref{PR_09}, $\chi^\prime(\sigma)=1$ for all $\sigma\in I_{\mfp}$.
				
				\item If $V_\chi \cap \varphi[\mfp]^{0}=0$, then $V_\chi \hookrightarrow  \varphi[\mfp]/\varphi[\mfp]^0\cong \bar{\varphi}[\mfp]$ as an $\F_{\mfp}[I_{\mfp}]$-submodule. Consequently, by Proposition \ref{PR_09}, we have $\chi(\sigma)=1$ for all $\sigma\in I_{\mfp}$.
			\end{itemize}
			
			\item Suppose the reduction of $\varphi$ modulo $\lambda$ has height $2$. We show that this case cannot arise. If not, we have $\varphi[\mfp]^{0}\cong \varphi[\mfp]$. By Corollary~\ref{ch_PR09}, $\varphi[\mfp]$ is an irreducible $\F_{p}[I_{\mfp}]$-module, hence irreducible as a $\F_{p}[G_F]$-module, which is a contradiction to our assumption that $\varphi[\mfp]$ is a reducible $\F_{\mfp}[G_{F}]$-module.
		\end{itemize}
	\end{proof}
    For any $\widetilde{\mfp}\in \Omega_A\setminus\{\mfp,\mfl\}$, the Drinfeld $A$-module $\varphi$ has good reduction at $\widetilde{\mfp}$, hence $\rho_{\varphi,\mfp}$ is unramified at $\widetilde{\mfp}$. Set $a_{\widetilde{\mfp}}(\varphi) := \trace\big(\rho_{\varphi,\mfp}(\Frob_{\widetilde{\mfp}})\big) \in A$. The following lemma describes $a_{\widetilde{\mfp}}(\varphi) \pmod \mfp$ in terms of the characters $\chi$ and $\chi'$. The proof is similar to that of~\cite[Lemma~5.4]{Zyw11} for $\mfl=(T)$, and we avoid repeating it.
	
	\begin{lem} \label{trace_chi_chi'}
		Let $\mfp\in \Omega_{A}$ be such that $\varphi[\mfp]$ is a reducible $\F_{\mfp}[G_F]$-module. Then, there is a $\zeta\in \F_\mfp^{\times}$ such that for any monic irreducible polynomial $\widetilde{\mfp}\in A \setminus \{\mfp, \mfl \}$, we have
		$ a_{\widetilde{\mfp}}(\varphi)\equiv \zeta^{-\deg_{T}(\widetilde{\mfp})} \widetilde{\mfp} +\zeta^{\deg_{T}(\widetilde{\mfp})}\pmod{\mfp}$.
	\end{lem} 
    \begin{lem}
       \label{trace_T-c}
		Let $ \widetilde{\mfp}= (T-d)\in\Omega_{A}\setminus\{\mfp,\mfl\}$ for some $d\in \F_{q}$. Then $a_{\widetilde{\mfp}}(\varphi) = r_{1}$.
	\end{lem}
	\begin{proof}
		Arguing as in the proof of Proposition~\ref{irreducible_rep_at_mfp}, we get $a_{\widetilde{\mfp}}(\varphi)=g_{1}(d)=r_1$, since $g_1\equiv r_1\pmod{T^q-T}$.
	\end{proof}
    Since $q\geq 5$, there exist distinct $d_{1},d_{2}\in \F_{q}$ such that $(T-d_{i})\in\Omega_{A}\setminus\{\mfp,\mfl\}$ for $i=1,2$. By Lemmas \ref{trace_chi_chi'} and \ref{trace_T-c} with $\widetilde{\mfp}_i=T-d_{i}$ for $i=1,2$, we get
	$$r_{1} \equiv \zeta^{-1}(T-d_{i})+\zeta \pmod{\mfp}$$
	this implies that $T - \zeta r_{1}+\zeta^2 \equiv d_{i}\pmod{\mfp}$ for distinct $d_{1},d_{2}\in \F_{q}$, which is a contradiction since $\deg_{T}(\mfp) \geq 1$. We are done with the proof of Proposition~\ref{irreducible_rep_for_all_mfp}.
	\end{proof}

    For all $\mfp\in \Omega_{A}$, the image $M_{\mfp}:=\rho_{\varphi,\mfp}(G_F)$ satisfies the hypotheses of Proposition~\ref{Pink_R_cond_T_adic_sur}, we complete the proof of Theorem~\ref{for_all_mfp_adic}.

	\section{Surjectivity of the adelic Galois representations}
	In this section, for a fixed Drinfeld $A$-module $\varphi$ of rank $2$ whose coefficients satisfy certain congruence and valuation conditions, we show that the associated adelic Galois representation $\rho_{\varphi}$ is surjective. Define $\widehat{\Lambda}_{A}:=\left\{(\mfl,r_{1})\in\Lambda_{A}: r_1\in \F_{q}^{\times} \right\}.$  We now state the main theorem of this section. 
	
	\begin{thm}
		\label{adelic_surjectivity}
		Let $q\geq 7$ be odd and $(\mfl,r_{1})\in \widehat{\Lambda}_{A}$. Let $\varphi$ be a Drinfeld $A$-module of rank $2$ defined by
		$$\varphi_{T}=T+g_{1}\tau-\mfl^{q-1}\tau^2,$$
        where $g_1\in A$ with $g_1\equiv r_1\pmod{T^q-T}$ and $\nu_{\mfl}(g_1)=0$. Then the adelic Galois representation $$\rho_{\varphi}:G_F\longrightarrow \GL_{2}(\widehat{A})$$
		is surjective.
	\end{thm}
    By Lemma~\ref{Lambda_A_is_non_empty} and Proposition~\ref{infinitely_many_g_1}, there are infinitely many examples satisfying Theorem~\ref{adelic_surjectivity}. Although the proof of Theorem~\ref{adelic_surjectivity} is similar to the lines of \cite{Zyw11}, we need to ensure that the abstract mechanism indeed works in our context. So, we give a sketch of a proof of Theorem~\ref{adelic_surjectivity}, along the lines of~\cite{Zyw11}, with necessary verifications. We first recall a lemma of Zywina (cf. \cite[Lemma A.7]{Zyw11}). 
    \begin{lem}\label{lemma_A_7_zyw}
        Let $\mfl_1, \mfl_2 \in \Omega_A$ be distinct, and set $\mfa=\mfl_{1}\mfl_{2}$. Let $H$ be a subgroup of $\GL_{2}(A/\mfa)$ for which the following hold:
			\begin{enumerate}
				\item $\det(H)=(A/\mfa)^{\times}$;
				\item For $i=1,2$, the projections $p^\prime_{i}:H^\prime\ra \SL_{2}(\F_{\mfl_{i}})$ are surjective, where $H^\prime=H\cap \SL_{2}(A/\mfa)$;
				\item the subring of $A/\mfa$ generated by the set
				$$\mathcal{S}:=\left\{\tr(h)^2/\det(h)|h\in H\right\}\cup\left\{\det(h)/\tr(h)^2|h\in H\ \text{with}\ \tr(h)\in (A/\mfa)^{\times}\right\}$$
				is exactly $A/\mfa$.
			\end{enumerate}
        Then $H=\GL_{2}(A/\mfa)$.
    \end{lem}

    \begin{prop}
       \label{mod_a_surjectivity}
        Let $\varphi$ be as in Theorem~\ref{adelic_surjectivity}. Let $\mfl_1, \mfl_2 \in \Omega_A$ be distinct, and set $\mfa=\mfl_{1}\mfl_{2}$. Then, the associated mod-$\mfa$ Galois representation is surjective, i.e., $\bar{\rho}_{\varphi,\mfa}(G_{F})=\GL_{2}(A/\mfa)$.
    \end{prop}
    \begin{proof}
        We show that $H:=\bar{\rho}_{\varphi,\mfa}(G_{F})$ satisfies the three hypotheses of Lemma~\ref{lemma_A_7_zyw}. By~\cite[Theorem 3.7.1(1)]{Pap23} and Proposition~\ref{Hayes}, arguing as in the proof of Proposition~\ref{det_of_mfp_adic_rep_sujective}, property $(1)$ follows. By Theorem~\ref{for_all_mfp_adic}, for $i=1,2$, the representations
		$\bar{\rho}_{\varphi,\mfl_{i}}$ are surjective, hence $\bar{\rho}_{\varphi,\mfl_{i}}([G_{F},G_{F}])= [\bar{\rho}_{\varphi,\mfl_{i}}(G_F), \bar{\rho}_{\varphi,\mfl_{i}}(G_F)]= [\GL_{2}(\F_{\mfl_{i}}), \GL_{2}(\F_{\mfl_{i}})]= \SL_{2}(\F_{\mfl_{i}})$. Therefore, property $(2)$ follows from $\bar{\rho}_{\varphi,\mfa} ([G_{F},G_{F}]) \subseteq H^\prime$.
				
		For property ($3$), take any $d\in \F_{q}$ such that $\mfp=(T-d)\in \Omega_{A}\setminus\{\mfl,\mfl_{1},\mfl_{2}\}$. As in the proof of Proposition~\ref{irreducible_rep_at_mfp}, we find the characteristic polynomial of $\bar{\rho}_{\varphi,\mfa}(\Frob_{\mfp})$ is congruent to $x^2-r_1x+(T-d)$ modulo $\mfa$. Therefore
			$$\det(\bar{\rho}_{\varphi,\mfa}(\Frob_{\mfp}))/\tr(\bar{\rho}_{\varphi,\mfa}(\Frob_{\mfp}))^2\equiv \frac{-(T-d)}{r_1^2}\pmod{\mfa}.$$
		Since $q\geq 7$, there exists $d \in \F_q$ such that $-\bar{T}+d$ and $-\bar{T}+d+1$ are in $\mathcal{S}$. These elements of $\mathcal{S}$ can generate all of $A/\mfa$. Therefore, property $(3)$ follows. Hence, the claim follows.
    \end{proof}
    Now, arguing in a similar way as in ~\cite[Lemma 6.3]{Zyw11} and using
    Proposition~\ref{mod_a_surjectivity} implies the following:
    \begin{lem}\label{lemma_6_3_zyw}
        Let $\varphi$ be as in Theorem~\ref{adelic_surjectivity}. Let $\mfl_1, \mfl_2 \in \Omega_A$ be distinct. Define
        $$\rho:G_{F}\longrightarrow \GL_{2}(A_{\mfl_1})\times\GL_{2}(A_{\mfl_2}),\quad \sigma\mapsto (\rho_{\varphi,\mfl_1}(\sigma),\rho_{\varphi,\mfl_2}(\sigma)).$$
        Then $\rho([G_{F},G_{F}])=\SL_{2}(A_{\mfl_1})\times\SL_{2}(A_{\mfl_2})$.
    \end{lem}
    
    We are finally in a position to give the proof of Theorem~\ref{adelic_surjectivity}.
	\begin{proof}[Proof of Theorem~\ref{adelic_surjectivity}]
		By~\cite[Theorem 3.7.1(1)]{Pap23} and Proposition~\ref{Hayes}, arguing as in the proof of Proposition~\ref{det_of_mfp_adic_rep_sujective} we get 
		$\det\rho_{\varphi}(G_{F})=\rho_{C}(G_{F})={\widehat{A}}^{\times}$. 
        
        Now, it is enough to show $\rho_{\varphi}([G_{F}, G_{F}])=\SL_{2}(\widehat{A})$ and this is equivalent to  $\bar{\rho}_{\varphi,\mfa}([G_{F},G_{F}])=\SL_{2}(A/\mfa)\cong \prod_{i}^{}\SL_{2}(A/{\mfl_{i}^{n_{i}}}), $ for every ideal $0 \neq \mfa=\mfl_{1}^{n_{1}}\mfl_{2}^{n_{2}}\cdots \mfl_{k}^{n_{k}}$ of $A$. By~\cite[Lemma A.3]{Zyw11}, each $\SL_{2}(A/{\mfl_{i}^{n_{i}}})$ has no non-trivial abelian quotient. By Lemma~\ref{lemma_6_3_zyw}, each projection
		$$\bar{\rho}_{\varphi,\mfl_{i}^{n_i}\mfl_{j}^{n_j}}:[G_{F},G_{F}]\longrightarrow \SL_{2}(A/{\mfl_{i}^{n_{i}}})\times \SL_{2}(A/{\mfl_{j}^{n_{j}}})$$ is surjective for $1\leq i\leq j\leq k$.  Now, by~\cite[Lemma 5.2.2]{Rib76}, we have $\bar{\rho}_{\varphi,\mfa}([G_{F},G_{F}])=\SL_{2}(A/\mfa)$ for all non-zero ideal $\mfa$ of $A$. This completes the proof of the Theorem.
	\end{proof}
    Finally, we conclude this article with some remarks that highlight comparisons between our results and those of~\cite{Zyw11}, \cite{Zyw25}.
	\subsection{Comparison with~\cite{Zyw11}}  In~\cite[Theorem 1.2]{Zyw11}, Zywina constructed a Drinfeld $A$-module of rank $2$ with surjective adelic Galois representation. In this article, we show that for every $(\mfl,r_1)\in\widehat{\Lambda}_{A}$, there exist infinitely many Drinfeld $A$-modules of rank $2$ with surjective adelic Galois representations. In particular, our construction recovers the example considered in~\cite[Theorem 1.2]{Zyw11} as it corresponds to $((T),1)\in\widehat{\Lambda}_{A}$.

    \subsection{Comparison with~\cite{Zyw25}}
     In the proof of~\cite[Theorem~9.1]{Zyw25}, Zywina introduces four subsets $\mathcal{R}$, $\mathcal{S}_m$, $\mathcal{T}_m$, and $\mathcal{U}_m$ of $A^2$ for $m \ge 2$ (cf.~\cite[p.~22]{Zyw25}), and shows that if $(g_1,g_2)\in A^2$ lies in $\mathcal{R}\cap\mathcal{S}_m\cap\mathcal{T}_m\cap\mathcal{U}_m$, then the associated Drinfeld $A$-module $\varphi_T = T + g_1\tau + g_2\tau^2$ has surjective $\mfp$-adic Galois representations for all $\mfp \in \Omega_A$. In particular, the set $\mathcal{R}$ consists of pairs $(g_1,g_2)\in A^2$ for which there exist at least two distinct nonzero prime ideals $\mfp$ of $A$ with $\deg_T(\mfp) > 1$, $\nu_{\mfp}(g_1)=0$ and $\nu_{\mfp}(g_2)=1$. In contrast, the Drinfeld $A$-modules constructed in this article (Theorem~\ref{for_all_mfp_adic}) lie outside the set $\mathcal{R}$, and hence we provide examples that are not covered by the framework in~\cite{Zyw25}.

	\section{Appendix}
    In this appendix, for any Drinfeld $K$-module $\varphi$ of rank $r$, we shall explicitly write down the coefficients of $\varphi_{T^j}$, for $j \geq 1$, in terms of the coefficients of $\varphi_T$. These explicit calculations may have appeared elsewhere; for the convenience of the reader, we reproduce the argument here.
     \begin{prop}
     \label{expression_for_varphi_a}
          Let $\varphi$ be a Drinfeld $K$-module of rank $r$, defined by $\varphi_{T}= g_0+g_1\tau+\cdots+g_r\tau^r$, where $g_0:=\gamma(T)$. 
    For $j\geq 1$, we have $\varphi_{T^j}=\varphi^{j}_{T}=\sum_{k=0}^{jr}c_{k}^{(j)}\tau^k$, where
       \begin{align}
          \label{expression_of_vaphi_T_cap_n_with_explicit_coefficients}
       	c_{k}^{(j)}=\sum_{\substack{0\leq i_1,i_2,\ldots,i_j\leq r \\ i_1+i_2+\cdots+i_j=k}}^{}g_{i_{1}}g_{i_{2}}^{q^{i_{1}}}g_{i_{3}}^{q^{i_1+i_2}}\cdots g_{i_{j}}^{q^{i_1+i_2+\cdots+i_{j-1}}}\quad\text{and}\quad c^{(0)}_{0} :=1.
       \end{align}
     \end{prop}
   \begin{proof} 
   For $j=0$, $\varphi_{T^0}=\varphi_{1}=\gamma(1)=1=c^{(0)}_{0}$, the formula in~\eqref{expression_of_vaphi_T_cap_n_with_explicit_coefficients} holds. Similarly, for $j=1$, $c_{k}^{(1)}=g_k$ for $k=0,1,\ldots,r$, again the formula in~\eqref{expression_of_vaphi_T_cap_n_with_explicit_coefficients} holds. Now, we shall prove the proposition by mathematical induction on $j$.

   Assume that the formula holds for some $j \geq 1$, i.e., $\varphi_{T^j}=\varphi^{j}_{T}=\sum_{m=0}^{jr}c_{m}^{(j)}\tau^m$ and $c_{m}^{(j)}$ as in \eqref{expression_of_vaphi_T_cap_n_with_explicit_coefficients}. Now $\varphi_{T^{j+1}}=\varphi_{T^j}\varphi_{T}$, so
    \begin{align*}
        \varphi_{T^{j+1}}&=\Big(\sum_{m=0}^{jr}c_{m}^{(j)}\tau^m\Big)\big(g_0+g_1\tau+\cdots+g_r\tau^r\big)\\
        &=\sum_{m=0}^{jr}c_{m}^{(j)}g_0^{q^m}\tau^m+\sum_{m=0}^{jr}c_{m}^{(j)}g_1^{q^m}\tau^{m+1}+\cdots+\sum_{m=0}^{jr}c_{m}^{(j)}g_r^{q^m}\tau^{m+r}\\
        &=\sum_{m=0}^{jr}c_{m}^{(j)}\big(\sum_{i=0}^{r}g_i^{q^m}\tau^{i}\big)\tau^{m}=\sum_{m=0}^{jr}\sum_{i=0}^{r}c_{m}^{(j)}g_i^{q^m}\tau^{m+i}.
    \end{align*}
    Hence, the coefficient of $\tau^k$ is 
    $$\sum_{\substack{m+i=k\\ 0\leq m\leq jr,\ 0\leq i\leq r}}c_{m}^{(j)}g_{i}^{q^m},$$
    and substituting the inductive expression for $c_{m}^{(j)}$, we get
    $$c_{k}^{(j+1)}=\sum_{\substack{0\leq i_1,i_2,\ldots,i_j,i_{j+1}\leq r \\ i_1+i_2+\cdots+i_j+i_{j+1}=k}}^{}g_{i_{1}}g_{i_{2}}^{q^{i_{1}}}g_{i_{3}}^{q^{i_1+i_2}}\cdots g_{i_{j}}^{q^{i_1+i_2+\cdots+i_{j-1}}}g_{i_{j+1}}^{q^{i_1+i_2+\cdots+i_{j-1}+i_j}}.$$
 This completes the proof of the proposition by induction.
    \end{proof} 
 
\begin{cor}
    \label{expression_of_varphi_a_with_explicit_coefficients}
    Let $a=\sum_{j=0}^{n}a_jT^j\in A$, so $\varphi_{a}=a(\varphi_{T})=\sum_{j=0}^{n}a_j\varphi^j_{T}$, substituting the expansion of $\varphi^j_{T}$, we get
    $$\varphi_{a}=\sum_{j=0}^{n}a_{j}\sum_{k=0}^{jr}c_{k}^{(j)}\tau^k = \sum_{k=0}   
                        ^{nr}\Big(\sum_{j=\big\lceil \tfrac{k}{r} \big\rceil}^{n}a_{j}c_{k}^{(j)}\Big)\tau^k. $$

\end{cor}

	\bibliographystyle{plain, abbrv}

\end{document}